\documentclass{article}
\usepackage[margin=2.5cm]{geometry}

\usepackage{graphicx}
\usepackage{url}
\usepackage{booktabs}
\usepackage{placeins}
\graphicspath{{figs/}}

\usepackage{amsmath,amsfonts,amssymb}
\usepackage{amsthm}
\newtheorem{theorem}{Theorem}[section]
\newtheorem{proposition}[theorem]{Proposition}
\newtheorem{lemma}[theorem]{Lemma}

\theoremstyle{definition}
\newtheorem{definition}[theorem]{Definition}

\theoremstyle{remark}
\newtheorem{remark}[theorem]{Remark}

\providecommand{\keywords}[1]{\textbf{Keywords:} #1}

\newcommand{\Torus}{\mathbb{T}}
\newcommand{\R}{\mathbb{R}}
\newcommand{\Z}{\mathbb{Z}}

\newcommand{\dd}{\,\mathrm{d}}
\newcommand{\Id}{\mathbf{1}}

\newcommand{\Hil}{\mathcal{H}}
\newcommand{\Bop}{\mathcal{B}}

\newcommand{\Azero}{\mathcal{A}_0}
\newcommand{\Mu}{\mathcal{M}_u}
\let\Alg\Mu

\newcommand{\AvN}{\mathcal{A}^{(s)}_{\mathrm{vN}}}
\newcommand{\Aadv}{\mathcal{A}^{(s)}_{0}}

\begin{document}

\title{Crossed-Product von Neumann Algebras for Incompressible Navier--Stokes Flows\\
and Spectral Complexity Indicators}

\title{Crossed-Product von Neumann Algebras for Incompressible 
Navier--Stokes Flows\\
and Spectral Complexity Indicators}

\author{Gautier-Edouard Filardo\\[2mm]
\textit{Efrei, Paris Panthéon-Assas}\\
\texttt{gautier-edouard.filardo@efrei.fr}\\[1mm]
\small{https://orcid.org/0009-0005-8310-2065}
}

\date{}

\maketitle

\begin{abstract}
We introduce a traceable operator-algebraic framework for incompressible transport on $M=\Torus^3$ (and, more generally, compact
Riemannian manifolds endowed with a smooth invariant probability measure).
Given an autonomous divergence-free velocity field $u$, the time-$1$ map $\Phi$ induces the Koopman unitary $U$ on $L^2(M)$ and the
crossed-product finite von Neumann algebra
\[
\Mu \;:=\; L^\infty(M)\rtimes_{\alpha}\Z \;=\; W^*(L^\infty(M),U),
\]
equipped with its canonical faithful normal trace $\tau_u$.
Within $\Mu$ we define tracial complexity functionals from commutators $[U,M_f]$ (with $M_f$ the multiplication operators) and associated
positive elements, and we connect these quantities to Fuglede--Kadison determinants and entropy-like tracial functionals.
In parallel, we introduce bounded regularized advection operators $\widetilde T_u^{(s)}:=K_s\,T_u\,K_s$ as differential-level probes of transport
noncommutativity, and we recall the Lie-bracket commutator identity at the formal generator level.
This provides a natural algebraic setting in which tracial invariants are well posed and, in principle, computable on discretizations
(e.g.\ cavity flow and vortex benchmarks).
\end{abstract}

\keywords{Incompressible Navier--Stokes \and Koopman operators \and crossed-product von Neumann algebras \and finite von Neumann algebras \and canonical trace \and Fuglede--Kadison determinant \and commutators \and operator-algebraic complexity}

\section{Introduction}\label{sec:intro}

\subsection{From fluid particles to operator algebras: physical motivation}\label{subsec:physical_motivation}

\paragraph{The challenge of turbulent mixing}
In a turbulent fluid, an initially smooth blob of dye is stretched and folded by the flow, developing structure at arbitrarily fine scales.
Classical PDE methods track this complexity through Sobolev norms $\|u(t)\|_{H^s}$, but these can blow up even when the flow remains
physically regular---a phenomenon tied to the \emph{supercriticality barrier} in 3D \cite{Tao2007NS,Tao2016JAMS}.

\paragraph{The noncommutative picture}
Our key observation is that fluid transport is fundamentally \emph{noncommutative}: advecting along velocity field $u$ and then $v$
is not the same as advecting along $v$ then $u$. The commutator $[T_u,T_v]$ measures this failure, and by Theorem~\ref{thm:comm}
it equals $T_{[u,v]}$ where $[u,v]$ is the Lie bracket of vector fields---precisely the \emph{vortex stretching term} in fluid dynamics.

\paragraph{Why operator algebras?}
Classical analysis tracks pointwise values $u(x,t)$. But in turbulence, ``points'' mix and lose identity.
Operator algebras replace the manifold $M$ by a noncommutative algebra generated by advection operators, where:
\begin{itemize}
\item Commutators $[T_u,T_v]$ encode vortex stretching and nonlinear cascade,
\item Traces $\tau_u$ provide scale-invariant averages immune to pointwise blow-up,
\item Fuglede--Kadison determinants detect ``hidden'' structural constraints (e.g.\ anisotropy, conservation laws).
\end{itemize}
This parallels how quantum mechanics replaces phase space $(x,p)$ by noncommuting operators $(\hat X,\hat P)$ when particles
lose classical trajectories \cite{Connes1994}.

\paragraph{Physical intuition for $\mathcal{S}(F)$}
The functional $\mathcal{S}(F)=\int \log(1+\sum|f-f\circ\Phi|^2)\,d\mu$ measures how much observables $f\in F$ are ``scrambled''
by one time-step of the flow $\Phi$ (Theorem~\ref{thm:exact_reduction}).
If $\mathcal{S}(F_K)$ grows unboundedly as we include finer-scale modes $F_K$ (Fourier cutoffs),
the flow is ``infinitely mixing'' at small scales---a signature of potential blow-up (Section~\ref{sec:bridge_classical}).

\paragraph{Grothendieck's topos viewpoint}
Beyond Connes' spectral geometry, we invoke Grothendieck's functorial philosophy \cite{SGA4,MacLaneMoerdijk1994}:
rather than fixing a point-set manifold, we organize the fluid ``space'' as a \emph{topos}---a category of sheaves encoding
multiscale discretizations. This provides a rigorous framework for comparing coarse-grained (numerical) and continuum (analytic) descriptions,
essential for validating computed $\mathcal{S}(F_K)$ against PDE estimates (Section~\ref{sec:numerics}).

\subsection{Clay Millennium formulations}
Let $u:\Omega\times[0,\infty)\to\R^3$ be the velocity field, $p$ the pressure, and $\nu>0$ the viscosity.
The 3D incompressible Navier--Stokes equations read
\begin{equation}\label{eq:NS}
\partial_t u + (u\cdot\nabla)u = -\nabla p + \nu \Delta u,\qquad \nabla\cdot u = 0.
\end{equation}
In Fefferman's official Clay statement \cite{FeffermanClay}, one is asked to establish global existence and smoothness
either on $\R^3$ or on the three-torus $\Torus^3$, or else to produce a finite-time singularity (blow-up) in one of these settings.
\paragraph{Scope of the operator-algebraic construction (autonomous or periodic setting)}
The crossed-product construction of this paper requires a single measure-preserving map $\Phi$ (the time-$1$ map).
Hence, throughout we treat either \emph{autonomous} incompressible flows $u(x)$, or \emph{time-periodic} incompressible flows $u(x,t)$ of period~$1$,
for which the Poincar\'e map $\Phi=\Phi_{1,0}$ is well-defined and measure-preserving.
For general non-autonomous Navier--Stokes fields $u(x,t)$, one must replace the $\Z$-action by an evolution cocycle/groupoid framework
(Section~\ref{subsec:groupoid_extension}).

\subsection{State of the art and the ``supercriticality barrier''}
Leray's foundational work established global weak (finite-energy) solutions in 3D,
but uniqueness and regularity remain open in general \cite{Leray1934}.
A frequently emphasized difficulty is supercriticality with respect to scaling:
the a priori bounds naturally available for Leray solutions are not scale-invariant in 3D,
and classical compactness/energy methods do not prevent concentration at small scales
(see e.g.\ \cite{Tao2007NS,Tao2016JAMS} for an expository discussion).
At the same time, there exist striking global regularity results for special large-data classes,
notably anisotropic regimes in which additional structure can be exploited \cite{CheminGallagherPaicu2011}.

\subsection{Status quaestionis: partial regularity and conditional criteria}\label{subsec:status}
Beyond the existence of Leray weak solutions \cite{Leray1934}, several landmark results clarify what is known and what fails to close the
Millennium gap.
First, suitable weak solutions enjoy \emph{partial regularity}: the possible singular set is small in a precise parabolic Hausdorff sense
(Caffarelli--Kohn--Nirenberg) \cite{CKN1982}.
Second, many \emph{conditional} criteria show that blow-up would force the divergence of a scale-sensitive quantity;
for instance, in the inviscid limit (Euler) the Beale--Kato--Majda criterion ties breakdown to vorticity growth \cite{BKM1984}.
Third, there exist genuinely global regularity results for special large-data regimes, including anisotropic scenarios
in which scale separation is exploited \cite{CheminGallagherPaicu2011}.
The common theme is that \emph{additional structure} (geometric, spectral, probabilistic, or anisotropic) can restore control
beyond the standard energy method, but no currently known mechanism rules out concentration for general 3D data \cite{FeffermanClay,Tao2007NS,Tao2016JAMS}.

\subsection{What is new here: from anisotropy to operator invariants}\label{subsec:novelty}
Our point of departure is to encode the nonlinear transport term by an operator-algebraic object and to treat ``structure''
(e.g.\ anisotropy, scale separation, vortex organization) as constraints on the generated algebra.
Compared with existing PDE approaches, we propose:
(i) a von Neumann algebra viewpoint where commutators measure noncommutativity/complexity of transport,
(ii) a bridge to cyclic cohomology and index-type pairings in the sense of Connes \cite{Connes1994,Loday1998,ConnesMoscovici1990},
and
(iii) a Grothendieck-style functorial layer (topos/motivic heuristics) to organize multiscale discretizations \cite{SGA4,MacLaneMoerdijk1994}.
This paper develops the operator-algebraic foundations and formulates a program of testable conjectures; proving a full Clay solution
is explicitly left open.

\subsection{Goal and contributions of this manuscript}
This paper does \emph{not} claim a complete resolution of the Clay problem.
Instead, it develops a framework in which \emph{conditional} regularity criteria and \emph{testable} conjectures
can be stated in operator-theoretic terms.

The main contributions at this stage are:
\begin{itemize}
\item the finite crossed-product von Neumann algebra $\Alg$ associated with the Koopman dynamics
(Section~\ref{sec:crossed}), and the regularized advection family $\widetilde T_u^{(s)}$
as differential-level probes (Section~\ref{sec:alg}),
\item a structural commutator identity linking $[T_u,T_v]$ to the Lie bracket of vector fields (Theorem~\ref{thm:comm}),
\item a program to express regularity criteria through operator-algebraic quantities
(commutator growth, traces, and Fuglede--Kadison-type determinants) (outlined here; developed in later sections),
\item a computational agenda (vortex rings / knotted vortices) intended to stress-test these criteria.
\end{itemize}

\subsection{Navigation guide for different audiences}\label{subsec:reading_guide}

This paper synthesizes tools from operator algebras, PDE analysis, and numerical methods. To facilitate reading:

\paragraph{For fluid dynamicists}
Start with \S\ref{sec:bridge_classical} (bridge to classical criteria), which connects the tracial functional $\mathcal{S}(F_K)$
to Beale--Kato--Majda \cite{BKM1984} via Theorem~\ref{thm:conditional_reg}.
The key object is defined \emph{explicitly} in Theorem~\ref{thm:exact_reduction} and requires only flow integration + quadrature.
Skip \S\ref{sec:connes} (cyclic cohomology) on first reading; it is programmatic.

\paragraph{For operator algebraists}
The main construction is the crossed product $\Mu=L^\infty(M)\rtimes_\alpha\Z$ in \S\ref{sec:crossed}.
Theorem~\ref{thm:exact_reduction} shows the commutator functional reduces to a classical integral, making the abstract tracial structure
\emph{computationally tractable}---a rare feature in noncommutative geometry.
The cyclic cohomology layer (\S\ref{sec:connes}) is programmatic but points toward index-theoretic applications.

\paragraph{For numerical analysts}
Section~\ref{sec:numerics} provides a complete protocol for evaluating $S_h(F)$ on discretizations.
The algorithm reduces to: (i) flow map approximation (standard ODE solver), (ii) Monte Carlo integration.
Table~\ref{tab:sanity_checks} validates the implementation on benchmarks with closed-form values;
Table~\ref{tab:blowup_diag} tests sensitivity to near-singular behavior.

\paragraph{Interdependence of sections}
\begin{itemize}
\item Core formalism: \S\ref{sec:alg} (regularized operators) $\to$ \S\ref{sec:crossed} (crossed product) $\to$ Theorem~\ref{thm:exact_reduction}.
\item Regularity connection: \S\ref{sec:bridge_classical} builds on Theorem~\ref{thm:exact_reduction}.
\item Numerics: \S\ref{sec:numerics} is self-contained given Theorem~\ref{thm:exact_reduction}.
\item Cyclic cohomology (\S\ref{sec:connes}) is independent and can be skipped.
\end{itemize}

\section{Operator-algebraic framework: regularized advection operators}\label{sec:alg}

\subsection{Function spaces and the advection operator}
Let $M=\Torus^3$ or a smooth compact 3-manifold with volume form $\dd x$.
For a smooth divergence-free vector field $u$, define the (formal) advection operator
\[
T_u f := u\cdot\nabla f .
\]
As a first-order differential operator, $T_u$ is generally unbounded on $L^2(M)$.
To work inside $\Bop(\Hil)$, we introduce a smoothing scale.

\subsection{Bounded regularization}
Let $\Delta$ denote the (positive) Laplace operator on scalar functions on $M$ and set
\[
K_s := (1-\Delta)^{-s/2},\qquad s>0.
\]
For $u\in C^\infty_\sigma(M)$ define the bounded operator
\[
\widetilde T_u^{(s)} := K_s\,T_u\,K_s.
\]
We denote by $\AvN$ the von Neumann algebra generated (in the weak operator topology) by $\{\widetilde T_u^{(s)}\}$,
and by $\Aadv$ the $*$-algebra they generate.

\subsection{Boundedness of the regularized advection operators}\label{subsec:bounded}

Let $M=\Torus^3$ (or a compact smooth Riemannian $3$-manifold without boundary) and let
$\Delta$ be the positive Laplace--Beltrami operator on scalar functions.
For $s>0$, $K_s=(1-\Delta)^{-s/2}$ extends to a bounded map $L^2(M)\to H^s(M)$.

\begin{proposition}[Order criterion for boundedness]\label{prop:boundedTu}
Let $M$ be a compact smooth Riemannian $3$-manifold and set $K_s=(1-\Delta)^{-s/2}$.
Assume $u\in W^{1,\infty}(M;\R^3)$ and define $\widetilde T_u^{(s)}=K_s\,T_u\,K_s$.
If $s\ge \tfrac12$, then $\widetilde T_u^{(s)}$ extends to a bounded operator on $L^2(M)$ and
\[
\|\widetilde T_u^{(s)}\|_{\mathcal B(L^2)} \le C_s\,\|u\|_{W^{1,\infty}}.
\]
\end{proposition}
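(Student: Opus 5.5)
Because $\widetilde T_u^{(s)}=K_s T_u K_s$ and $T_u$ has order one, the natural route is to split the single derivative between the two smoothing factors. This uses only mapping properties of $K_s$ and the fact that multiplication by a $W^{1,\infty}$ function is bounded on $H^t$ for $|t|\le 1$. The operator factors as
\[
L^2 \xrightarrow{\;K_s\;} H^{s} \xrightarrow{\;T_u\;} H^{s-1} \xrightarrow{\;K_s\;} H^{2s-1}\hookrightarrow L^2 ,
\]
and the last inclusion is bounded exactly when $2s-1\ge 0$, i.e.\ $s\ge\tfrac12$. Since $K_s$ is an isometry $H^t\to H^{t+s}$ for all real $t$ (functional calculus, with $\|f\|_{H^t}:=\|(1-\Delta)^{t/2}f\|_{L^2}$), it suffices to prove $\|T_u g\|_{H^{s-1}}\le C\|u\|_{W^{1,\infty}}\|g\|_{H^s}$. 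Taking larger $s$ only adds smoothing, since $K_s=K_{1/2}K_{s-1/2}$ with $\|K_{s-1/2}\|\le1$, so I would reduce to the critical case $s=\tfrac12$. I will first take $f\in C^\infty(M)$ and extend to $L^2$ by density once the bound is established.

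For $s=\tfrac12$ I would argue by duality: it suffices to show
\[
|\langle T_u g,h\rangle|\le C\|u\|_{W^{1,\infty}}\|g\|_{H^{1/2}}\|h\|_{H^{1/2}}
\]
for smooth $g,h$. The key step is to make the form symmetric. Writing $\langle T_u g,h\rangle=\int (u\cdot\nabla g)\,\bar h$ and integrating by parts gives $-\int g\,(u\cdot\nabla\bar h)-\int(\operatorname{div}u)\,g\bar h$. The divergence term is bounded by $\|u\|_{W^{1,\infty}}\|g\|_{L^2}\|h\|_{L^2}$, and it vanishes in the divergence-free case. This shows that $T_u+T_u^*$ is a zeroth-order operator, so the difficulty lies entirely in the antisymmetric part.

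To place half a derivative on each side, I write $\nabla g=\nabla\Lambda^{-1/2}\Lambda^{1/2}g$ with $\Lambda=(1-\Delta)^{1/2}$, so that
\[
\langle T_u g,h\rangle=\langle \Lambda^{-1/2}T_u\Lambda^{-1/2}\,\Lambda^{1/2}g,\Lambda^{1/2}h\rangle .
\]
This reduces the problem to showing that $\Lambda^{-1/2}T_u\Lambda^{-1/2}$ is bounded on $L^2$, which is the statement itself. I would therefore decompose with the commutator:
\[
\Lambda^{-1/2}T_u\Lambda^{-1/2}=\Lambda^{-1/2}[T_u,\Lambda^{-1/2}]+\Lambda^{-1/2}\Lambda^{-1/2}T_u \;=\;\Lambda^{-1/2}[T_u,\Lambda^{-1/2}]+\Lambda^{-1}T_u .
\]
The second term is bounded because $\Lambda^{-1}\nabla$ has order zero and $u\in L^\infty$. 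Concretely, $\Lambda^{-1}(u\cdot\nabla f)=\Lambda^{-1}\nabla\cdot(uf)-\Lambda^{-1}((\operatorname{div}u)f)$, which is bounded by $C\|u\|_{W^{1,\infty}}\|f\|_{L^2}$.

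The main obstacle is the commutator $[T_u,\Lambda^{-1/2}]$. I expect it to have order $-\tfrac12$, i.e.\ to map $L^2\to H^{1/2}$ with norm $\lesssim\|u\|_{W^{1,\infty}}$. For smooth $u$ this follows from the symbolic calculus of pseudodifferential operators. For Lipschitz $u$ I would use a Kato--Ponce / Calder\'on commutator estimate, namely $\|[\Lambda^{1/2},u]\partial g\|_{L^2}\lesssim\|\nabla u\|_{L^\infty}\|g\|_{H^{1/2}}$, which is Calder\'on's first commutator theorem in the fractional setting. On $\Torus^3$ I would prove this by a paraproduct / Littlewood--Paley decomposition of $u$; on a general compact manifold I would localize with a partition of unity and use charts, which produces lower-order error terms that are harmless. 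With this estimate the first term is bounded, and tracking constants gives $C_s\|u\|_{W^{1,\infty}}$, completing the proof.
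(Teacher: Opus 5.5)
Your proposal is correct. Its skeleton, the factorization $L^2\to H^s\to H^{s-1}\to H^{2s-1}\hookrightarrow L^2$, is exactly the paper's; the differences lie in how you justify the middle arrow.

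First, your reduction to $s=\tfrac12$ via $\widetilde T_u^{(s)}=K_{s-1/2}\,\widetilde T_u^{(1/2)}\,K_{s-1/2}$, with $\|K_{s-1/2}\|\le 1$, does not appear in the paper, and it is a genuine improvement. The paper simply asserts $\|T_u\|_{H^s\to H^{s-1}}\lesssim\|u\|_{W^{1,\infty}}$ for the given $s$. That requires multiplication by $u_i$ to be bounded on $H^{s-1}$, so it is only valid for $s\le 2$. For $s>2$ it already fails for $u=(\phi(x_2),0,0)$ with $\phi$ Lipschitz but in no $H^{1+\varepsilon}$, and $g=\sin(2\pi x_1)$. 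Your monotonicity in $s$ repairs this and also gives $C_s\le C_{1/2}$.

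Second, for the remaining estimate $T_u:H^{1/2}\to H^{-1/2}$ you invoke a fractional Calder\'on commutator bound for $[\Lambda^{1/2},u]\partial$. That bound is true, but it needs paraproduct machinery. On a general manifold, your claim that the chart errors are harmless is only asserted. It is also unnecessary: the fact you quote in your first paragraph already closes the argument. Indeed $\partial_i:H^{1/2}\to H^{-1/2}$, and multiplication by $u_i$ is bounded on $H^{1/2}$ (interpolating between $L^2$ and $H^1$), hence on $H^{-1/2}$ by duality.

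Equivalently, your own integration by parts shows that $B(g,h)=\int(u\cdot\nabla g)\,\bar h$ is bounded on $L^2\times H^1$. It is trivially bounded on $H^1\times L^2$. Bilinear complex interpolation then gives boundedness on $H^{1/2}\times H^{1/2}$ with constant $\lesssim\|u\|_{W^{1,\infty}}$, on any compact manifold and without commutator theory.

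What the commutator route buys in addition is a finer description: under your estimate, the piece $\Lambda^{-1/2}[T_u,\Lambda^{-1/2}]$ is smoothing, mapping $L^2$ into $H^1$. The proposition does not need this. Note also that your intermediate step which ``reduces to the statement itself'' is circular, so only the commutator decomposition carries weight there.
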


\begin{proof}
On a compact manifold, $K_s=(1-\Delta)^{-s/2}$ is bounded from $L^2(M)$ to $H^s(M)$.
For $u\in W^{1,\infty}$ the first-order operator $T_u f=u\cdot\nabla f$ is bounded $H^s(M)\to H^{s-1}(M)$ with norm
$\|T_u\|_{H^s\to H^{s-1}}\le C\,\|u\|_{W^{1,\infty}}$.
Finally, $K_s$ is bounded $H^{s-1}(M)\to H^{2s-1}(M)$, and if $s\ge \tfrac12$ then $2s-1\ge 0$ so the embedding
$H^{2s-1}(M)\hookrightarrow L^2(M)$ yields a bounded map $K_s:H^{s-1}(M)\to L^2(M)$.
Composing gives $\widetilde T_u^{(s)}=K_sT_uK_s\in\mathcal{B}(L^2)$ and the stated estimate.Order-$0$ pseudodifferential operators are bounded on $L^2$ by the Calder\'on--Vaillancourt theorem \cite{CalderonVaillancourt1972}.
\end{proof}

\subsection{Commutator identity}
\begin{theorem}\label{thm:comm}
For smooth vector fields $u,v$ on $M$, one has on $C^\infty(M)$ the identity
\[
[T_u,T_v] = T_{[u,v]},
\qquad [u,v]:=u\cdot\nabla v - v\cdot\nabla u .
\]
\end{theorem}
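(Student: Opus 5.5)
The identity is a direct computation on smooth test functions, and I would carry it out in local coordinates (on $\Torus^3$, the global periodic coordinates $x^1,x^2,x^3$). The formal generators $T_u,T_v$ are first-order differential operators mapping $C^\infty(M)$ into itself, so no domain issues arise. The bounded regularizations $\widetilde T^{(s)}_u$ play no role here: the statement concerns the unregularized generators only.

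Fix $f\in C^\infty(M)$ and write $T_u f=u^i\partial_i f$ with the summation convention. First I expand
\[
T_uT_v f = u^i\partial_i\bigl(v^j\partial_j f\bigr) = u^i(\partial_i v^j)\,\partial_j f + u^iv^j\,\partial_i\partial_j f,
\]
and similarly for $T_vT_u f$ with $u$ and $v$ interchanged. Subtracting, the second-order terms $u^iv^j\partial_i\partial_j f - v^iu^j\partial_i\partial_j f$ cancel after relabelling indices, because mixed partials of a $C^2$ function commute (Schwarz). What remains is
\[
[T_u,T_v]f = \bigl(u^i\partial_i v^j - v^i\partial_i u^j\bigr)\partial_j f = T_{[u,v]}f,
\qquad [u,v]^j = (u\cdot\nabla v - v\cdot\nabla u)^j .
\]

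On a general compact manifold I would argue chart by chart. Both sides are differential operators acting on $f$, so they are local, and equality in every coordinate chart gives equality globally. Equivalently, this is the standard fact that the commutator of two derivations of $C^\infty(M)$ is again a derivation, namely the one associated to the Lie bracket. I would also check that the coordinate expression $u^i\partial_i v^j - v^i\partial_i u^j$ is chart-independent; it is, since it is the intrinsic Lie bracket, and no Christoffel terms appear because they cancel in the antisymmetrization. So the notation $u\cdot\nabla v$ in the statement can be read either in coordinates or via the Levi-Civita connection, which is torsion-free.

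There is no real obstacle; the only care needed is the cancellation of the second-order terms (symmetry of the Hessian) and the coordinate-invariance remark. Divergence-freeness is not needed. For divergence-free $u,v$, $[u,v]$ is again divergence-free, which is worth noting so that the bracket stays within $C^\infty_\sigma$.
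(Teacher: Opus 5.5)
Your proposal is correct and follows the paper's proof, which is the same direct expansion of $u\cdot\nabla(v\cdot\nabla f)-v\cdot\nabla(u\cdot\nabla f)$ on smooth $f$. You additionally make explicit the cancellation of the second-order terms via symmetry of mixed partials, and the chart-independence (torsion-free) remark for general $M$, both of which the paper leaves implicit.
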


\begin{proof}
For $f\in C^\infty(M)$,
\[
T_u(T_v f)-T_v(T_u f)
= u\cdot\nabla(v\cdot\nabla f)-v\cdot\nabla(u\cdot\nabla f)
= (u\cdot\nabla v - v\cdot\nabla u)\cdot\nabla f,
\]
which is exactly $T_{[u,v]}f$.
\end{proof}

\begin{remark}
Theorem~\ref{thm:comm} is stated for the unregularized operators on smooth functions.
In applications, commutators of $\widetilde T_u^{(s)}$ inherit lower-order terms coming from $K_s$.
\end{remark}

\section{Koopman dynamics and crossed-product von Neumann algebras}\label{sec:crossed}

\subsection{Measure-preserving flow and Koopman unitary}
Let $M=\Torus^3$ with Haar probability measure $\mu$.
Given an \emph{autonomous} divergence-free vector field $u\in C^1(M;\R^3)$, let $\Phi_t$ be its flow:
$\frac{d}{dt}\Phi_t(x)=u(\Phi_t(x))$, $\Phi_0=\mathrm{id}$.
Since $\nabla\!\cdot u=0$, the maps $\Phi_t$ preserve $\mu$.

Define the Koopman unitary group $(U_t)_{t\in\R}$ on $L^2(M,\mu)$ by
\[
(U_t f)(x) := f(\Phi_{-t}(x)).
\]
Then $U_t$ is unitary and implements an action $\alpha_t$ on $L^\infty(M)$ by
\[
\alpha_t(f)= f\circ \Phi_{-t}.
\]
In particular, for the time-$1$ map $\Phi:=\Phi_1$, define $\alpha:\Z\to\mathrm{Aut}(L^\infty(M))$ by
$\alpha_n(f)=f\circ \Phi^{-n}$.

\subsection{Crossed product by the time-$1$ map}\label{subsec:cpdef}
Let $A:=L^\infty(M)$ acting on $H:=L^2(M,\mu)$ by multiplication operators $M_f$.
Let $U:=U_1$ be the Koopman unitary.
The crossed-product von Neumann algebra is defined as
\[
\Alg \;:=\; A \rtimes_{\alpha} \Z \;=\; W^\ast\big( A,\,U \big),
\]
i.e.\ the von Neumann algebra generated by $A$ and $U$ subject to the covariance relation
\[
U\,M_f\,U^{-1} \;=\; M_{\alpha(f)}\qquad (f\in A).
\]
Equivalently, elements of $\Alg$ admit Fourier-type expansions $\sum_{n\in\Z} M_{f_n}U^n$
in a suitable topology.

\subsection{Canonical conditional expectation and trace}\label{subsec:trace}
There is a canonical faithful normal conditional expectation $E:\Alg\to A$ given on finite sums by
\[
E\Big(\sum_{n\in\Z} M_{f_n}U^n\Big) \;:=\; M_{f_0}.
\]
Since $(M,\mu)$ is a probability space, the functional
\[
\tau_u(x) \;:=\; \int_M E(x)\,\dd\mu
\]
defines a faithful normal tracial state on $\Alg$.
In particular, $(\Alg,\tau_u)$ is a \emph{finite} von Neumann algebra.
\subsection{Main theorem: exact reduction of the tracial commutator functional}\label{subsec:main_theorem}

Let $F=\{f_1,\dots,f_m\}\subset L^\infty(M)$ and define the positive element
\[
A_F \;:=\; \Id + \sum_{j=1}^m [U,M_{f_j}]^\ast [U,M_{f_j}] \ \in\ \Alg,
\qquad
\mathcal S(F)\;:=\;\tau_u(\log A_F).
\]

\begin{theorem}[Exact formula and vanishing criterion]\label{thm:exact_reduction}
With the notation above, $A_F$ belongs to $L^\infty(M)\subset \Alg$ and is given by
\[
A_F \;=\; M_{\,1+h_F},
\qquad
h_F(x)\;:=\;\sum_{j=1}^m |f_j(x)-f_j(\Phi x)|^2 \ \in L^\infty(M).
\]
Consequently,
\[
\mathcal S(F)\;=\;\int_M \log\!\big(1+h_F(x)\big)\,d\mu(x)\ \in [0,\infty).
\]
Moreover,
\[
\mathcal S(F)=0 \quad\Longleftrightarrow\quad f_j\circ\Phi=f_j\ \ \mu\text{-a.e. for all }j.
\]
\end{theorem}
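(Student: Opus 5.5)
The plan is to compute the commutator $[U,M_f]$ explicitly using the covariance relation, show that $[U,M_f]^\ast[U,M_f]$ collapses to a multiplication operator, and then evaluate the trace via the conditional expectation $E$, which is the identity on $A$.

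\textbf{Step 1 (commutator).} By the covariance relation $UM_fU^{-1}=M_{\alpha(f)}$ we have $UM_f=M_{\alpha(f)}U$. Hence
\[
[U,M_f]=UM_f-M_fU=M_{\alpha(f)-f}\,U .
\]
Since $U$ is unitary and $M_g^\ast=M_{\bar g}$,
\[
[U,M_f]^\ast[U,M_f]=U^\ast M_{|\alpha(f)-f|^2}U=U^{-1}M_{|\alpha(f)-f|^2}U .
\]
Applying covariance in the form $U^{-1}M_gU=M_{\alpha^{-1}(g)}=M_{g\circ\Phi}$ gives $M_{|f-f\circ\Phi|^2}$.

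\textbf{Step 2 (conventions).} This is the step where care is needed. With $\alpha(f)=f\circ\Phi^{-1}$ we get $\alpha^{-1}(|\alpha(f)-f|^2)=|f-f\circ\Phi|^2$. If one instead directly computes $(U^\ast M_gUh)(x)$ with $(Uh)(x)=h(\Phi^{-1}x)$, the result is the same: $U^\ast$ acts by composition with $\Phi$, so $U^\ast M_g U=M_{g\circ\Phi}$. I would verify this once on $L^2$ to pin the sign of the exponent.

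\textbf{Step 3 (assembling $A_F$).} Summing over $j$ gives $A_F=M_{1+h_F}$ with $h_F\in L^\infty$, since each $f_j$ is bounded and $\Phi$ preserves $\mu$, so composition is well defined on $L^\infty$. Because $1+h_F\ge1$, the element $A_F$ is a positive invertible element of the abelian algebra $A$. Borel functional calculus inside $A\cong L^\infty(M)$ then gives $\log A_F=M_{\log(1+h_F)}\in A$, which is bounded since $0\le\log(1+h_F)\le\log(1+\|h_F\|_\infty)$. The expectation $E$ restricts to the identity on $A$ (the $n=0$ coefficient), so
\[
\tau_u(\log A_F)=\int_M\log(1+h_F)\,d\mu\in[0,\infty).
\]
One should note that functional calculus in $\Alg$ agrees with that in the von Neumann subalgebra $A$, since both are computed in $\mathcal B(L^2)$.

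\textbf{Step 4 (vanishing criterion).} The integrand $\log(1+h_F)$ is nonnegative, so $\mathcal S(F)=0$ holds if and only if $\log(1+h_F)=0$ $\mu$-a.e. This is equivalent to $h_F=0$ a.e., which, being a sum of nonnegative terms, is equivalent to $f_j\circ\Phi=f_j$ a.e. for every $j$.

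The only genuine obstacle is the bookkeeping of $\Phi$ versus $\Phi^{-1}$ in Step 2. The rest is direct computation.
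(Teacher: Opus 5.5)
Your proposal is correct and follows the paper's proof essentially step for step. You use covariance to write $[U,M_f]=M_gU$ with $g=f\circ\Phi^{-1}-f$, conjugate to get $U^{-1}M_{|g|^2}U=M_{|g|^2\circ\Phi}=M_{|f-f\circ\Phi|^2}$, sum over $j$, apply functional calculus inside $L^\infty(M)$ together with $E$ being the identity there, and read off the vanishing criterion from the nonnegativity of $\log(1+h_F)$. Your additional checks are correct and only make explicit what the paper leaves implicit: the direct verification that $U^\ast M_gU=M_{g\circ\Phi}$, the well-definedness of composition on $L^\infty$ via measure preservation, and the agreement of functional calculus in $A$ and in $\Alg$.
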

\begin{remark}[Fuglede--Kadison determinant]
With the canonical trace $\tau_u$, the Fuglede--Kadison determinant of $A_F$ is
\[
\Delta_{\tau_u}(A_F)=\exp\!\big(\tau_u(\log A_F)\big)=\exp\!\big(\mathcal S(F)\big).
\]
\end{remark}

\begin{proof}
Using covariance $U M_f U^{-1}=M_{f\circ\Phi^{-1}}$, we have
\[
[U,M_f]=U M_f - M_f U = (M_{f\circ\Phi^{-1}}-M_f)U=:M_g\,U,\qquad g:=f\circ\Phi^{-1}-f.
\]
Hence
\[
[U,M_f]^*[U,M_f] = (M_gU)^*(M_gU)=U^{-1}M_{\overline g}M_gU
=U^{-1}M_{|g|^2}U = M_{|g|^2\circ\Phi}.
\]
Since $|g|^2\circ\Phi = |f-f\circ\Phi|^2$, we obtain
\[
[U,M_{f}]^*[U,M_{f}] = M_{|f-f\circ\Phi|^2}.
\]
Summing over $F=\{f_1,\dots,f_m\}$ yields
\[
A_F = \Id + \sum_{j=1}^m [U,M_{f_j}]^*[U,M_{f_j}]
     = M_{\,1+h_F}\in L^\infty(M),
\qquad
h_F(x):=\sum_{j=1}^m |f_j(x)-f_j(\Phi x)|^2.
\]
Therefore $\log(A_F)=M_{\log(1+h_F)}$ and, for the canonical trace,
\[
S(F)=\tau_u(\log A_F)=\int_M \log(1+h_F(x))\,d\mu(x)\in[0,\infty).
\]
Finally, $S(F)=0$ iff $\log(1+h_F)=0$ $\mu$-a.e., i.e.\ iff $h_F=0$ $\mu$-a.e., equivalently
$f_j\circ\Phi=f_j$ $\mu$-a.e.\ for all $j$.
\end{proof}

\begin{remark}
The finiteness is the key technical point for using Fuglede--Kadison determinants.
Standard references for crossed products and the canonical trace are
Kadison--Ringrose and Takesaki.
\end{remark}

\begin{remark}[Why incompressibility is structurally essential]\label{rem:incompressible}
The incompressibility condition $\nabla\!\cdot u=0$ is not merely a physical assumption: it is the
structural input that makes the Koopman dynamics \emph{traceable}.
Indeed, on $M=\Torus^3$ equipped with Haar probability measure $\mu$, the flow $(\Phi_t)$ of an autonomous
divergence-free field preserves $\mu$, so $(U_t f)(x)=f(\Phi_{-t}(x))$ is a unitary group on $L^2(M,\mu)$.
This measure-preservation yields the canonical conditional expectation $E:\Alg\to L^\infty(M)$ and hence a
canonical faithful normal tracial state $\tau_u(x)=\int_M E(x)\,d\mu$.
As a consequence, Fuglede--Kadison determinants and related tracial functionals are well posed on the
natural crossed-product algebra associated with the transport.

If the dynamics is not measure-preserving (e.g.\ compressible settings), $U_t$ is no longer unitary on
$L^2(M,\mu)$ and the associated crossed-product typically ceases to be finite; the appropriate replacement
involves weights and modular theory (type~III phenomena), which substantially changes both the analytic and
numerical scope. We therefore restrict here to incompressible, measure-preserving flows.
\end{remark}

\subsection{Benchmark sanity checks on tori}\label{subsec:benchmarks}

\paragraph{Translation on $\Torus^d$}
Let $\Phi(x)=x+a$ on $\Torus^d$ and $f_k(x)=e^{2\pi i k\cdot x}$ with $k\in\Z^d$.
Then $f_k(\Phi x)=e^{2\pi i k\cdot a}f_k(x)$ and
\[
|f_k-f_k\circ\Phi|^2=\big|1-e^{2\pi i k\cdot a}\big|^2
=2-2\cos(2\pi k\cdot a).
\]
By Theorem~\ref{thm:exact_reduction},
\[
\mathcal S(\{f_k\})
=\log\!\big(1+2-2\cos(2\pi k\cdot a)\big)
=\log\!\big(3-2\cos(2\pi k\cdot a)\big).
\]
In particular, $\mathcal S(\{f_k\})=0$ if and only if $k\cdot a\in\Z$.

\begin{lemma}[Uniformity of integer phases]\label{lem:phase_uniform}
Let $x$ be Haar-distributed on $\Torus^d$ and let $\ell\in\Z^d\setminus\{0\}$.
Then $t:=\ell\cdot x\!\!\pmod{1}$ is Haar-distributed on $\Torus^1$.
\end{lemma}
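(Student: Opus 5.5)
To prove Lemma~\ref{lem:phase_uniform}, I would identify the law of $t$ through its Fourier coefficients. A Borel probability measure $\nu$ on $\Torus^1$ is determined by the numbers $\hat\nu(n)=\int_{\Torus^1} e^{-2\pi i n t}\dd\nu(t)$, $n\in\Z$. Haar measure is the unique probability measure with $\hat\nu(n)=0$ for every $n\neq 0$. It therefore suffices to compute, for each $n\in\Z$, the expectation $\mathbb E[e^{-2\pi i n t}]$ where $t=\ell\cdot x \bmod 1$.

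First I would check that $t$ is well defined. Changing $x$ by a vector in $\Z^d$ changes $\ell\cdot x$ by an integer, because $\ell\in\Z^d$. Hence the map $x\mapsto \ell\cdot x \bmod 1$ is a continuous homomorphism $\pi_\ell:\Torus^d\to\Torus^1$, and the law of $t$ is the pushforward $(\pi_\ell)_*\mu$. The function $e^{-2\pi i n t}$ only depends on $t \bmod 1$, so
\[
\mathbb E\big[e^{-2\pi i n t}\big]=\int_{\Torus^d} e^{-2\pi i (n\ell)\cdot x}\dd\mu(x).
\]
This integral is the Fourier coefficient of Haar measure on $\Torus^d$ at the frequency $n\ell\in\Z^d$. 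By orthogonality of characters, which here reduces to a product of one-dimensional integrals $\int_0^1 e^{-2\pi i m x_j}\dd x_j=\delta_{m,0}$, it equals $1$ if $n\ell=0$ and $0$ otherwise. Since $\ell\neq 0$, the condition $n\ell=0$ holds only when $n=0$. So all nonzero Fourier coefficients of the law of $t$ vanish, and the law is Haar measure.

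The only step needing any care is the uniqueness claim: a measure on $\Torus^1$ is determined by its Fourier coefficients. This follows because trigonometric polynomials are uniformly dense in $C(\Torus^1)$ (Stone--Weierstrass or Fejér), combined with the Riesz representation theorem. I would cite this as standard. An alternative that avoids Fourier analysis is also available. The pushforward $(\pi_\ell)_*\mu$ is invariant under translation by every element of $\pi_\ell(\Torus^d)$, since $\mu$ is translation invariant and $\pi_\ell$ is a homomorphism. Moreover $\pi_\ell$ is surjective: choose $j$ with $\ell_j\neq0$ and move along the $j$-th coordinate. Uniqueness of Haar measure on $\Torus^1$ then gives the result. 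I would present the Fourier argument and mention this one in a sentence.
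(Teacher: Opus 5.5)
Your proof is correct, but your main route differs from the paper's. The paper's proof is precisely the alternative you mention in one sentence: $\pi_\ell(x)=\ell\cdot x \bmod 1$ is a continuous surjective homomorphism $\Torus^d\to\Torus^1$, and the pushforward of Haar measure under such a map is Haar measure on the target. The paper simply quotes this pushforward fact; your translation-invariance computation plus uniqueness of Haar measure on $\Torus^1$ is exactly its proof.

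Your primary Fourier argument instead computes the law of $t$ directly. You pull the character $e^{-2\pi i n t}$ back to $e^{-2\pi i (n\ell)\cdot x}$, integrate it to get $\delta_{n\ell,0}=\delta_{n,0}$, and then use that a probability measure on $\Torus^1$ is determined by its Fourier coefficients.

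The two proofs are dual to one another. The hypothesis $\ell\neq 0$ enters the paper's proof as surjectivity of $\pi_\ell$, and yours as injectivity of the dual map $n\mapsto n\ell$ from $\Z$ to $\Z^d$. For compact abelian groups these two conditions are equivalent by Pontryagin duality. The structural route is shorter and applies verbatim to any continuous surjective homomorphism of compact groups. Your route is more elementary and self-contained: it needs only orthogonality of characters on $\Torus^d$ and the density of trigonometric polynomials in $C(\Torus^1)$, and it produces the vanishing of every nonzero Fourier coefficient by explicit computation rather than by appeal to uniqueness of Haar measure.
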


\begin{proof}
The map $\pi_\ell:\Torus^d\to\Torus^1$, $\pi_\ell(x)=\ell\cdot x\pmod{1}$, is a continuous surjective group homomorphism.
The pushforward of Haar measure under a continuous surjective homomorphism is Haar measure on the target group.
\end{proof}

\begin{lemma}[A logarithmic integral]\label{lem:log_integral}
One has
\[
\int_0^1 \log\!\big(3-2\cos(2\pi t)\big)\,dt
=\log\!\Big(\frac{3+\sqrt5}{2}\Big).
\]
\end{lemma}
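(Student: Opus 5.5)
The plan is to factor the integrand as the modulus squared of a linear polynomial in $z=e^{2\pi i t}$ and then apply Jensen's formula (equivalently, the mean value property of harmonic functions). Let $\varphi=\frac{3+\sqrt5}{2}$, so that $\varphi^{-1}=\frac{3-\sqrt5}{2}$, $\varphi+\varphi^{-1}=3$ and $\varphi\varphi^{-1}=1$. For $|z|=1$ I would verify
\[
|z-\varphi|^2=(z-\varphi)(\bar z-\varphi)=1+\varphi^2-2\varphi\cos(2\pi t)=\varphi\big(3-2\cos(2\pi t)\big).
\]
The last equality uses $1+\varphi^2=3\varphi$, which is the quadratic $\varphi^2-3\varphi+1=0$ satisfied by both roots $\frac{3\pm\sqrt5}{2}$. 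This yields the pointwise identity
\[
\log\big(3-2\cos(2\pi t)\big)=2\log\big|e^{2\pi i t}-\varphi\big|-\log\varphi .
\]

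Next I would integrate over $t\in[0,1]$. Write $|z-\varphi|=\varphi\,|1-z/\varphi|$. Since $\varphi>1$, the function $z\mapsto\log|1-z/\varphi|$ is harmonic on a neighbourhood of the closed unit disc, because $1-z/\varphi$ has no zero there. By the mean value property its average over the unit circle equals its value at $z=0$, namely $0$. Hence
\[
\int_0^1\log\big|e^{2\pi it}-\varphi\big|\,dt=\log\varphi .
\]

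Combining the two displays gives
\[
\int_0^1\log\big(3-2\cos(2\pi t)\big)\,dt=2\log\varphi-\log\varphi=\log\varphi=\log\Big(\frac{3+\sqrt5}{2}\Big),
\]
which is the claim. As an alternative route to the mean-value step, one could expand $\log|1-z/\varphi|=-\mathrm{Re}\sum_{n\ge1}z^n/(n\varphi^n)$. This series converges uniformly on $|z|=1$, and every term has zero mean.

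The main obstacle is choosing the correct root. One must factor with the root of modulus larger than $1$ so that no singularity lies inside the disc. If one instead used $\varphi^{-1}$, Jensen's formula would be needed, and the zero inside the disc contributes exactly the compensating term. The integrand has no singularities on the circle, since $3-2\cos\ge1$, so no integrability issues arise.
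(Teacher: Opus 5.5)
Your argument is correct and is essentially the paper's. Both substitute $z=e^{2\pi i t}$, factor through the roots $\frac{3\pm\sqrt5}{2}$ of $z^2-3z+1$, and evaluate by a Jensen/mean-value argument. The only cosmetic difference is your symmetric factorization $\varphi\,(3-2\cos(2\pi t))=|z-\varphi|^2$: it needs only the mean-value property of the zero-free harmonic function $\log|1-z/\varphi|$. The paper instead writes $3-2\cos\theta=|z^2-3z+1|$ and invokes Jensen's formula to account for the interior root $r_-=\varphi^{-1}$.
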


\begin{proof}
Set $\theta=2\pi t$ and $z=e^{i\theta}$. Then $3-2\cos\theta = 3-(z+z^{-1})
=\frac{|z^2-3z+1|}{|z|}$ on $|z|=1$, hence
\[
\int_0^{2\pi}\log(3-2\cos\theta)\,d\theta
=\int_0^{2\pi}\log|z^2-3z+1|\,d\theta.
\]
The polynomial $p(z)=z^2-3z+1$ has roots $r_{\pm}=\frac{3\pm\sqrt5}{2}$ with $r_+>1>r_->0$.
By Jensen's formula (applied to $p$ on the unit circle), the integral equals $2\pi\log r_+$.
Dividing by $2\pi$ gives the claimed identity.
\end{proof}

\paragraph{Hyperbolic toral automorphism (cat map) on $\Torus^2$}
Let $\Phi(x)=Ax$ with $A\in SL(2,\Z)$ hyperbolic and $f_k(x)=e^{2\pi i k\cdot x}$.
If $A^T k\neq k$, then $f_k\circ\Phi=e^{2\pi i (A^Tk)\cdot x}$ and
\[
|f_k-f_k\circ\Phi|^2
=\big|1-e^{2\pi i (A^Tk-k)\cdot x}\big|^2
=2-2\cos(2\pi\,\ell\cdot x),
\qquad \ell:=A^Tk-k\neq 0.
\]
By Lemma~\ref{lem:phase_uniform}, $t=\ell\cdot x\pmod{1}$ is uniform on $\Torus^1$, hence
Theorem~\ref{thm:exact_reduction} and Lemma~\ref{lem:log_integral} yield
\[
\mathcal S(\{f_k\})
=\int_0^1 \log\!\big(3-2\cos(2\pi t)\big)\,dt
=\log\!\Big(\frac{3+\sqrt5}{2}\Big).
\]

\subsection{Non-autonomous extension: evolution cocycles and measured groupoids}\label{subsec:groupoid_extension}
For general time-dependent incompressible fields $u(x,t)$, the dynamics is encoded by an evolution family
$\Phi_{t,s}:M\to M$ $(t,s\in\R)$ of measure-preserving maps satisfying the cocycle identities
\[
\Phi_{t,t}=\mathrm{id},\qquad \Phi_{t,s}\circ \Phi_{s,r}=\Phi_{t,r},\qquad \mu\circ\Phi_{t,s}^{-1}=\mu.
\]
The associated Koopman operators form an evolution cocycle
\[
U(t,s)f := f\circ \Phi_{s,t},\qquad U(t,s)U(s,r)=U(t,r),
\]
so there is in general \emph{no single} $\Z$-action generated by one unitary $U$.

\paragraph{Periodic reduction}
If $u(x,t)$ is $1$-periodic in time, then the Poincar\'e map $\Phi:=\Phi_{1,0}$ is well-defined and measure-preserving.
In that case $U:=U(1,0)$ generates a crossed product $L^\infty(M)\rtimes_\alpha \Z$ and
Theorem~\ref{thm:exact_reduction} applies verbatim.
In the fully non-autonomous non-periodic setting, the scalar functional
\[
S_1(\mathcal{F};t)=\int_M \log\!\Big(1+\sum_{f\in\mathcal F}|f(x)-f(\Phi_{t+1,t}x)|^2\Big)\,d\mu(x)
\]
remains well-defined whenever $\Phi_{t+1,t}$ is measure-preserving, but it no longer arises from a finite tracial crossed product; a canonical operator-algebraic formulation typically requires cocycles/groupoids and weights.

\paragraph{General case}
For non-periodic non-autonomous dynamics, the natural operator-algebraic object is a (measured) groupoid von Neumann algebra
associated with the orbit equivalence relation/evolution cocycle; traces are typically replaced by canonical weights.
We refer to Feldman--Moore's measured equivalence relation approach and Renault's groupoid framework for standard constructions.

\subsection{Relation with differential generators (programmatic)}\label{subsec:generator}
For smooth $f\in C^\infty(M)$ one has in $L^2$,
\[
\lim_{t\to 0}\frac{U_t f - f}{t} \;=\; -\,u\cdot\nabla f,
\]

so the advection operator $T_u$ appears as the (strong) generator of the Koopman group.
The regularized operators $\widetilde T_u^{(s)}$ from Section~\ref{sec:alg} can thus be viewed as
bounded ``differential-level'' probes compatible with the Koopman/crossed-product picture.

\section{Noncommutative-geometric layer: cyclic cohomology and index pairings}\label{sec:connes}

\subsection{The spectral paradigm in Connes' sense}\label{subsec:spectralparadigm}
Noncommutative geometry replaces a space by an algebra $\Azero$ of ``functions'' and recovers geometry from spectral data.
In Connes' framework, a \emph{spectral triple} $(\Azero,\Hil,D)$ consists of a dense $*$-subalgebra $\Azero\subset \mathcal{B}(\Hil)$,
a self-adjoint (typically unbounded) operator $D$ with compact resolvent, and a bounded commutator condition $[D,a]\in \mathcal{B}(\Hil)$
for $a\in\Azero$ \cite{Connes1994}.
In our setting, the fundamental objects are transport/advection operators and their von Neumann closure $W^{*}\!\big(\{\widetilde T_u^{(s)}\}\big)$ (Section~\ref{sec:alg}).
Because advection operators are first-order differential operators, a literal spectral-triple verification is delicate and depends on
the chosen ``regularization'' (choice of $\Azero$ and $D$). We therefore state the following as a \emph{programmatic construction}
to be made fully precise in subsequent work.

\begin{definition}[Regularized advection algebra]\label{def:regA0}
Fix a compact 3-manifold $M$ (e.g.\ $M=\Torus^3$) and let $\Delta$ be the Laplace--Beltrami operator on scalar functions.
For $s>0$ define the smoothing operator $K_s := (1-\Delta)^{-s/2}$ on $\Hil=L^2(M)$ and, for a smooth divergence-free field $u$,
the regularized transport operator
\[
\widetilde T_u^{(s)} := K_s\, T_u\, K_s.
\]
Let $\Azero^{(s)}$ be the $*$-algebra generated by $\{\widetilde T_u^{(s)}:\ u\in C^\infty_\sigma(M)\}$.
\end{definition}

\begin{remark}
For $s$ large enough, $\widetilde T_u^{(s)}$ is of order $\le 0$ in the pseudodifferential sense, hence bounded on $L^2$,
and one may hope to combine $\Azero^{(s)}$ with a classical elliptic operator $D$ (e.g.\ a Dirac-type operator) to form spectral data.
We do not claim the full spectral-triple axioms here; the role of Definition~\ref{def:regA0} is to provide a technically controllable
dense subalgebra on which cyclic cocycles can be defined.
\end{remark}

\subsection{Cyclic cohomology: cocycles from traces}\label{subsec:cyclic}
Cyclic cohomology provides ``noncommutative de Rham classes'' for associative algebras and is the natural target of the Chern character
in noncommutative geometry \cite{Connes1994,Loday1998}.
Given a $*$-algebra $\Azero$, a (continuous) trace $\tau$ produces cyclic cocycles via multilinear functionals built from commutators and
(when available) heat-kernel regularizations.
In particular, in the presence of spectral data one obtains entire cyclic cocycles and index pairings
(see e.g.\ Connes--Moscovici for a prototypical construction in a group-theoretic context) \cite{ConnesMoscovici1990}.

\begin{definition}[A basic cyclic 2-cocycle from a trace]\label{def:cyc2}
Assume $\Azero$ carries a faithful trace $\tau$ (e.g.\ if $\Azero$ sits densely in a finite von Neumann algebra).
For $a_0,a_1,a_2\in\Azero$ define
\[
\varphi_\tau(a_0,a_1,a_2):=\tau\!\big(a_0[a_1,a_2]\big).
\]
When $\tau$ is tracial and $\Azero$ is stable under commutators, $\varphi_\tau$ defines a cyclic 2-cocycle on $\Azero$.
\end{definition}

\begin{remark}
Definition~\ref{def:cyc2} is intentionally elementary: it isolates the operator-algebraic mechanism by which noncommutativity
(commutators) can be turned into numerical invariants once a trace is available.
More refined cocycles (JLO, local index formula) require additional analytic input and are postponed.
\end{remark}

\subsection{Index-type pairings and interpretation for transport complexity}\label{subsec:index}
The guiding principle is that cyclic cocycles pair with $K$-theory classes to give integer (or real) invariants, generalizing
classical index theorems \cite{Connes1994,Loday1998}.
In the present program, we view the growth of commutators and trace-derived cocycles as quantitative proxies for transport complexity
(e.g.\ vortex interactions), and we aim to relate boundedness/compactness properties of these pairings to regularity mechanisms.

\section{Scaling, anisotropic regularity, and a place for operator invariants}\label{sec:scaling}

\subsection{Scaling and criticality}\label{subsec:criticality}
On $\R^3$ (and locally on $\Torus^3$), the Navier--Stokes system \eqref{eq:NS} is invariant under the scaling
\[
u(x,t)\mapsto u_\lambda(x,t):=\lambda\,u(\lambda x,\lambda^2 t),\qquad
p(x,t)\mapsto p_\lambda(x,t):=\lambda^2\,p(\lambda x,\lambda^2 t).
\]
A norm is called \emph{critical} if it is invariant under this scaling; subcritical norms typically yield global control,
while supercritical quantities may concentrate at small scales.
Fefferman's Clay formulation highlights that no known a priori estimate in 3D prevents such concentration in general \cite{FeffermanClay},
and the ``supercriticality barrier'' is widely discussed in the literature \cite{Tao2007NS,Tao2016JAMS}.

\subsection{A representative large-data success: anisotropic regimes}\label{subsec:cgp}
A striking counterpoint to the general open problem is that global regularity can hold for \emph{large} initial data
when the data enjoy additional anisotropic structure.
A representative result is due to Chemin--Gallagher--Paicu \cite{CheminGallagherPaicu2011}, who establish global regularity
for classes of large 3D data exhibiting slow variation in one direction (and related anisotropic features).
Such results suggest that ``hidden'' geometric constraints can substitute for missing scale-invariant estimates.

\subsection{How the operator-algebra viewpoint interfaces with anisotropy}\label{subsec:interface}
We write $\AvN(\mathcal{U}_\varepsilon):=W^{*}\!\big(\{\widetilde T_u^{(s)}:\ u\in\mathcal{U}_\varepsilon\}\big)\subset \AvN$.

From the operator-algebra perspective, anisotropy and scale separation can be encoded by restricting the generating fields
to subclasses $\mathcal{U}_\varepsilon$ (e.g.\ slowly varying in one coordinate) and studying the corresponding generated algebras
$\AvN(\mathcal{U}_\varepsilon)\subset \AvN$.
Two concrete mechanisms are suggested:
\begin{itemize}
\item \emph{Commutator control:} anisotropic structure may force cancellations that bound $[T_u,T_v]$ (or its regularized versions),
hence constrain the noncommutativity growth inside $M$.
\item \emph{Trace/cocycle stability:} if $M(\mathcal{U}_\varepsilon)$ admits a finite trace (or an approximately finite-dimensional structure),
cyclic cocycles such as Definition~\ref{def:cyc2} may remain stable along the flow and yield computable invariants.
\end{itemize}
We formulate precise conjectures and numerical tests based on these mechanisms in later sections.

\section{Fuglede--Kadison functionals and traceable commutator diagnostics}\label{sec:conjectures}

Throughout this section we work with the finite crossed product $(\Alg,\tau_u)$
constructed in Section~\ref{sec:crossed}.
Let $U:=U_1$ be the Koopman unitary and $M_f$ the multiplication operator by $f\in L^\infty(M)$.
\paragraph{Time-step notation}
Let $(\Phi_{t,s})_{t\ge s}$ be a measure-preserving evolution family on $(M,\mu)$.
For $h>0$ define the one-step map $\Phi_{t+h,t}$ and, for a finite family $F\subset L^\infty(M)$,
\[
S_h(F;t) := \int_M \log\!\Big(1+\sum_{f\in F}|f(x)-f(\Phi_{t+h,t}x)|^2\Big)\,d\mu(x).
\]
In the autonomous case $S_h(F;t)$ is independent of $t$ and we write $S_h(F)$.
In the autonomous/time-periodic crossed-product setting, Theorem~2 yields a tracial realization for $h=1$; we write $S(F):=S_1(F)$.

\subsection{A traceable commutator functional inside the crossed product}

Fix a finite family $F \subset L^\infty(M)$ and define
\[
A_F(u) := \Id + \sum_{f\in F}[U,M_f]^*[U,M_f]\in \Alg,
\qquad
S(F):=\tau_u(\log A_F(u))\ge 0.
\]
By Theorem~\ref{thm:exact_reduction}, this admits the explicit formula
\[
S(F)=\int_M \log\!\Big(1+\sum_{f\in F}|f(x)-f(\Phi x)|^2\Big)\,d\mu(x),
\qquad \Phi:=\Phi_{1,0},
\]

which makes the functional directly computable on discretizations.
\begin{proposition}[Elementary bounds and correlation form]\label{prop:S_bounds}
Let $\mathcal{F}\subset L^\infty(M)$ be finite and set
\[
h_{\mathcal F}(x):=\sum_{f\in\mathcal{F}}|f(x)-f(\Phi x)|^2.
\]
Then
\[
0\le \mathcal S(\mathcal F)
=\int_M \log(1+h_{\mathcal F})\,d\mu
\le \int_M h_{\mathcal F}\,d\mu
=\sum_{f\in\mathcal{F}}\|f-f\circ\Phi\|_{L^2(\mu)}^2
\le \sum_{f\in\mathcal{F}}\|f-f\circ\Phi\|_\infty^2.
\]
Moreover,
\[
\mathcal S(\mathcal F)\ \ge\ \frac{1}{1+\|h_{\mathcal F}\|_\infty}\int_M h_{\mathcal F}\,d\mu.
\]
If $U$ denotes the Koopman unitary $Uf=f\circ\Phi^{-1}$, then for each $f\in L^2(M)$,
\[
\|f-f\circ\Phi\|_2^2
=2\|f\|_2^2-2\,\Re\langle f,\,Uf\rangle,
\]
so the upper bound can be rewritten in terms of Koopman autocorrelations.
\end{proposition}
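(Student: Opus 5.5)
The argument rests on Theorem~\ref{thm:exact_reduction}, which gives $\mathcal S(\mathcal F)=\int_M\log(1+h_{\mathcal F})\,d\mu$ with $h_{\mathcal F}\ge 0$ and $h_{\mathcal F}\in L^\infty(M)$. Everything else follows from pointwise scalar inequalities for $\phi(y)=\log(1+y)$ on $[0,\infty)$, integrated against the probability measure $\mu$, together with the invariance of $\mu$ under $\Phi$.

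\emph{Upper chain.} Since $h_{\mathcal F}\ge 0$, we have $\log(1+h_{\mathcal F})\ge 0$, which gives $\mathcal S(\mathcal F)\ge 0$. The inequality $\log(1+y)\le y$ for $y\ge 0$, integrated, gives $\mathcal S(\mathcal F)\le\int h_{\mathcal F}\,d\mu$. Linearity of the integral over the finite sum gives $\int h_{\mathcal F}\,d\mu=\sum_f\|f-f\circ\Phi\|_{L^2(\mu)}^2$. Finally, $\mu(M)=1$ implies $\|g\|_2\le\|g\|_\infty$ for each $g=f-f\circ\Phi$. Note that $f\circ\Phi\in L^\infty$ is well defined as an a.e.\ class because $\Phi$ preserves $\mu$, so null sets pull back to null sets.

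\emph{Lower bound.} I would use the elementary inequality $\log(1+y)\ge \frac{y}{1+y}$ for $y\ge0$. It holds because both sides vanish at $0$ and the derivatives compare as $\frac{1}{1+y}\ge\frac{1}{(1+y)^2}$. Setting $K=\|h_{\mathcal F}\|_\infty$, we have $h_{\mathcal F}(x)\le K$ a.e., hence $\frac{h}{1+h}\ge\frac{h}{1+K}$ pointwise a.e. Integrating gives the claimed bound. The only subtlety is working a.e.\ with the essential supremum.

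\emph{Correlation form.} Expand $\|f-f\circ\Phi\|_2^2=\|f\|_2^2+\|f\circ\Phi\|_2^2-2\Re\langle f,f\circ\Phi\rangle$. Measure preservation gives $\|f\circ\Phi\|_2=\|f\|_2$. It remains to identify $\langle f,f\circ\Phi\rangle$ with a Koopman autocorrelation. By invariance of $\mu$ under $\Phi^{-1}$ (substituting $x=\Phi^{-1}y$), $\langle f,f\circ\Phi\rangle=\int f\,\overline{f\circ\Phi}\,d\mu=\int (f\circ\Phi^{-1})\,\overline f\,d\mu=\langle Uf,f\rangle$. Its real part equals $\Re\langle f,Uf\rangle$, since $\langle Uf,f\rangle=\overline{\langle f,Uf\rangle}$. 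This step is the main place to be careful: one must track the convention $Uf=f\circ\Phi^{-1}$ and the sesquilinearity convention. Taking real parts removes the ambiguity, so the identity holds irrespective of which slot is conjugate-linear. Substituting into the upper chain then expresses the bound through $\sum_f\bigl(2\|f\|_2^2-2\Re\langle f,Uf\rangle\bigr)$.
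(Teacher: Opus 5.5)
Your proof is correct and follows the paper's argument: the same pointwise inequalities $\log(1+y)\le y$ and $\log(1+y)\ge \frac{y}{1+y}\ge \frac{y}{1+\|h_{\mathcal F}\|_\infty}$ integrated against the probability measure $\mu$, and the same expansion of $\|f-f\circ\Phi\|_2^2$ combined with unitarity. Your change of variables showing $\langle f,f\circ\Phi\rangle=\langle Uf,f\rangle$, so that only the real parts coincide with $\Re\langle f,Uf\rangle$, fills in a step the paper leaves implicit, since $f\circ\Phi=U^{*}f$ rather than $Uf$.
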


\begin{proof}
The upper bounds follow from $\log(1+z)\le z$ for $z\ge 0$ and from
$\int |f-f\circ\Phi|^2\le \|f-f\circ\Phi\|_\infty^2$.
For the lower bound, $\log(1+z)\ge \frac{z}{1+z}\ge \frac{z}{1+\|h_{\mathcal F}\|_\infty}$ for $0\le z\le \|h_{\mathcal F}\|_\infty$.
Finally, $\|f-f\circ\Phi\|_2^2=\|f\|_2^2+\|Uf\|_2^2-2\Re\langle f, Uf\rangle$ and $\|Uf\|_2=\|f\|_2$.
\end{proof}

\subsection{Programmatic conjecture}\label{subsec:prog_conj}

For a general time-dependent incompressible velocity field, let $(\Phi_{t,s})_{t\ge s}$ be a measure-preserving evolution family
(Section~\ref{subsec:groupoid_extension}) and define the \emph{one-step} maps $\Phi_t:=\Phi_{t+1,t}$.
For a finite family $\mathcal F\subset L^\infty(M)$ we set
\[
S_1(\mathcal{F};t)
:=\int_M \log\!\Big(1+\sum_{f\in\mathcal F}|f(x)-f(\Phi_t x)|^2\Big)\,d\mu(x),
\]
which is well-defined for any measure-preserving $\Phi_t$.
In the autonomous or time-periodic case, $\Phi_t$ is generated by a single map $\Phi$ (Poincar\'e/Koopman step) and
$S_1(\mathcal{F};t)$ reduces to the tracial crossed-product quantity of Theorem~\ref{thm:exact_reduction}.

\paragraph{Conjecture (informal)}
Suitable uniform bounds on $S_1(\mathcal{F};t)$, for families $\mathcal F$ resolving increasingly fine scales (e.g.\ Fourier cutoffs $F_K$ on $\Torus^3$),
should obstruct the formation of singularities in the underlying incompressible dynamics.

\begin{remark}
The conjecture is programmatic: it motivates the diagnostics introduced here, but no implication toward Clay regularity is proved in this paper.
In the fully non-autonomous non-periodic setting, a genuinely tracial formulation typically requires a groupoid/cocycle von Neumann algebra
and canonical weights; we therefore restrict the rigorous crossed-product trace to the autonomous/time-periodic regime.
\end{remark}

\subsection{Tracial complexity functionals in the crossed product}

Let $M=\Torus^3$ with Haar probability measure $\mu$, and let $\Phi$ be the time-$1$ map
of a volume-preserving flow. Denote by $U$ the associated Koopman unitary on $L^2(M,\mu)$ and by
$M_f$ the multiplication operator by $f\in L^\infty(M)$.
Let $\Mu:=L^\infty(M)\rtimes_\alpha \Z = W^*(L^\infty(M),U)$ with canonical trace $\tau_u$.

\begin{proposition}[A priori bound]\label{prop:S_bound}
One has
\[
0\le \mathcal S(F)\le \log\!\Big(1+\sum_{j=1}^m\|[U,M_{f_j}]\|^2\Big).
\]
\end{proposition}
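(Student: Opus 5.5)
The plan is to reduce everything to the explicit formula of Theorem~\ref{thm:exact_reduction}, which gives $A_F=M_{1+h_F}$ with $h_F=\sum_j|f_j-f_j\circ\Phi|^2$ and $\mathcal S(F)=\int_M\log(1+h_F)\,d\mu$. The lower bound $\mathcal S(F)\ge 0$ is then immediate, since $h_F\ge 0$ pointwise and therefore $\log(1+h_F)\ge 0$ $\mu$-a.e.

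For the upper bound we first bound the integrand by its essential supremum. Because $\log(1+\cdot)$ is increasing and $\mu$ is a probability measure,
\[
\mathcal S(F)\le \log\!\big(1+\|h_F\|_\infty\big).
\]
The second ingredient links $\|h_F\|_\infty$ to the commutator norms. The proof of Theorem~\ref{thm:exact_reduction} shows
\[
[U,M_{f_j}]^*[U,M_{f_j}]=M_{|f_j-f_j\circ\Phi|^2}.
\]
Taking operator norms and using the $C^*$-identity $\|X^*X\|=\|X\|^2$ together with $\|M_g\|=\|g\|_\infty$, we get $\|[U,M_{f_j}]\|^2=\|f_j-f_j\circ\Phi\|_\infty^2$.

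Next we apply the triangle inequality for the sup norm to the sum defining $h_F$:
\[
\|h_F\|_\infty\le\sum_{j=1}^m\big\|\,|f_j-f_j\circ\Phi|^2\big\|_\infty=\sum_{j=1}^m\|[U,M_{f_j}]\|^2.
\]
Monotonicity of $\log(1+\cdot)$ then gives the claimed inequality.

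An equivalent operator-theoretic route avoids computing $h_F$ at all. One notes that $\Id\le A_F\le(1+\sum_j\|[U,M_{f_j}]\|^2)\Id$ in $\Alg$, applies operator monotonicity of $\log$, and then uses positivity and normalization of $\tau_u$. I would mention this only as a remark.

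No step is a genuine obstacle. The one point to state carefully is that the operator norm of a multiplication operator on $L^2(M,\mu)$ equals the essential sup of its symbol. This is what justifies $\|[U,M_{f_j}]\|=\|f_j-f_j\circ\Phi\|_\infty$, and it relies on $\mu$ being a genuine (faithful) measure. In any case, only the inequality $\|M_g\|\ge$ what is needed is used, since we just need $\||f_j-f_j\circ\Phi|^2\|_\infty\le\|[U,M_{f_j}]\|^2$, and this also follows directly from $\|X^*X\|=\|X\|^2$.
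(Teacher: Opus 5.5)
Your proof is correct, but your main line differs from the paper's. The paper's proof is exactly the operator-theoretic route you set aside as a remark. It observes $\Id\le A_F\le\big(1+\sum_j\|[U,M_{f_j}]\|^2\big)\Id$ directly in $\Alg$, using $0\le X^*X\le\|X\|^2\Id$. It then passes to $0\le\log A_F\le\log\big(1+\sum_j\|[U,M_{f_j}]\|^2\big)\Id$ by functional calculus and applies $\tau_u$.

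That argument never uses Theorem~\ref{thm:exact_reduction}, nor the identification $\|M_g\|=\|g\|_\infty$. It holds verbatim for any unitary and any elements of a von Neumann algebra with a positive normalized functional; traciality plays no role. Since the upper comparison is with a scalar multiple of $\Id$, the spectral mapping theorem suffices, and full operator monotonicity of $\log$ is not needed.

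Your route instead reduces everything to the abelian subalgebra via Theorem~\ref{thm:exact_reduction} and works pointwise. What it buys is an explicit reading of the constant, $\|[U,M_{f_j}]\|=\|f_j-f_j\circ\Phi\|_\infty$. It also gives the intermediate bound $\mathcal S(F)\le\log(1+\|h_F\|_\infty)$, which can be strictly sharper than the stated one because the supremum of a sum may be smaller than the sum of suprema. The operator route yields the same refinement as $\mathcal S(F)\le\log\|A_F\|$ if one keeps $\|A_F\|$ instead of bounding it termwise.

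One small inaccuracy in your last paragraph: $\big\||f_j-f_j\circ\Phi|^2\big\|_\infty\le\|[U,M_{f_j}]\|^2$ does not follow from the $C^*$-identity alone. That identity only converts $\|[U,M_{f_j}]\|^2$ into $\|M_{|f_j-f_j\circ\Phi|^2}\|$. You still need the nontrivial direction $\|g\|_\infty\le\|M_g\|$, obtained by testing $M_g$ against the indicator of $\{|g|>c\}$. Since you had already invoked the equality $\|M_g\|=\|g\|_\infty$, this is cosmetic.
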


\begin{proof}
$A_F\ge \Id$ implies $\log A_F\ge 0$, hence $\mathcal S(F)\ge 0$.
Also $A_F\le \big(1+\sum_j\|[U,M_{f_j}]\|^2\big)\Id$ so $\log A_F\le
\log\!\big(1+\sum_j\|[U,M_{f_j}]\|^2\big)\Id$, and applying $\tau_u$ gives the claim.
\end{proof}
\paragraph{Example}
For the steady shear flow $u(x,y,z)=(U(y),0,0)$ on $\Torus^3$, the associated
Koopman operator acts by $U_t f(x,y,z)=f(x-tU(y),y,z)$.
In this case $[U,M_f]$ vanishes for all $f$ depending only on $y,z$,
and $\mathcal S(F)=0$ for such observables, illustrating that the tracial
complexity functional detects nontrivial mixing directions.

\section{Bridge to classical regularity criteria}\label{sec:bridge_classical}

\subsection{Connection with Beale--Kato--Majda and critical norms}\label{subsec:bkm_connection}

We now relate the tracial commutator functional $S_h(F_K)$ to classical conditional regularity criteria.

\begin{proposition}[Link to gradient transport]\label{prop:link_gradient}
For a smooth divergence-free field $u\in C^1_\sigma(M)$ and Fourier observables $f_k(x)=e^{2\pi i k\cdot x}$ with $k\in\Z^3$,
one has for small $h>0$,
\[
|f_k(x)-f_k(\Phi_h x)|^2 
= \big|1-e^{2\pi i k\cdot \delta_h(x)}\big|^2
= 2-2\cos(2\pi k\cdot \delta_h(x)),
\]
where $\delta_h(x):=\Phi_h(x)-x = h\,u(x)+\mathcal{O}(h^2)$.
\end{proposition}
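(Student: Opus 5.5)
The statement has two parts: an exact algebraic identity for $|f_k(x)-f_k(\Phi_h x)|^2$, and an asymptotic expansion of the displacement $\delta_h$. I would treat them separately.

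\textbf{Step 1: the identity.} Lift $\Phi_h$ to $\R^3$ so that $\Phi_h(x)-x$ is a well-defined vector. Two choices of lift differ by an element of $\Z^3$, so $e^{2\pi i k\cdot\delta_h(x)}$ does not depend on the lift, because $k\in\Z^3$. Then
\[
f_k(\Phi_h x)=e^{2\pi i k\cdot x}\,e^{2\pi i k\cdot\delta_h(x)} .
\]
Since $|e^{2\pi i k\cdot x}|=1$, this gives
\[
|f_k(x)-f_k(\Phi_h x)|=|1-e^{2\pi i k\cdot\delta_h(x)}| .
\]
Finally, for real $\theta$ one has $|1-e^{i\theta}|^2=(1-\cos\theta)^2+\sin^2\theta=2-2\cos\theta$. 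This is the same computation already used for the translation benchmark in Section~\ref{subsec:benchmarks}, now with a point-dependent phase.

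\textbf{Step 2: the expansion of $\delta_h$.} Work on the universal cover $\R^3$, where the lifted flow solves $\frac{d}{dt}\Phi_t(x)=u(\Phi_t(x))$ with $u$ periodic. Integrating,
\[
\delta_h(x)=\int_0^h u(\Phi_t x)\,dt = h\,u(x)+\int_0^h\big(u(\Phi_t x)-u(x)\big)\,dt .
\]
Since $u\in C^1$ on the compact torus, $u$ is Lipschitz with constant $\|\nabla u\|_\infty$. Moreover $|\Phi_t x-x|\le t\|u\|_\infty$. Hence the remainder is bounded by
\[
\int_0^h \|\nabla u\|_\infty\, t\,\|u\|_\infty\,dt=\tfrac12 h^2\|u\|_\infty\|\nabla u\|_\infty .
\]
This gives $\delta_h(x)=h\,u(x)+\mathcal O(h^2)$ uniformly in $x$, with explicit constant $\tfrac12\|u\|_\infty\|\nabla u\|_\infty$.

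\textbf{Main obstacle.} The only delicate point is making $\delta_h$ meaningful on the torus. The cleanest fix is to define it as the integral $\int_0^h u(\Phi_t x)\,dt\in\R^3$. This is the canonical lift, agrees with $\Phi_h(x)-x$ modulo $\Z^3$, and is continuous in $x$. For small $h$ it is also the unique representative of norm $<1/2$, which justifies the phrase ``for small $h>0$''. Once this is fixed, both parts are routine. The identity holds for every $h$, while the expansion uses smallness of $h$ only through the size of the $\mathcal O(h^2)$ term.
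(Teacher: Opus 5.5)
Your proposal is correct and follows the paper's argument, which is exactly your Step~1: factor $f_k(\Phi_h x)=e^{2\pi i k\cdot x}\,e^{2\pi i k\cdot(\Phi_h(x)-x)}$, then use $|e^{2\pi i k\cdot x}|=1$ and $|1-e^{i\theta}|^2=2-2\cos\theta$. The paper only gestures at the expansion $\delta_h(x)=h\,u(x)+\mathcal{O}(h^2)$ with the words ``direct Taylor expansion'' and never explains why $\Phi_h(x)-x$ makes sense on $\Torus^3$; your lift-independence remark and your uniform bound $\tfrac12 h^2\|u\|_\infty\|\nabla u\|_\infty$, obtained from the integral form of the flow equation, supply exactly these omitted steps.
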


\begin{proof}
Direct Taylor expansion: $f_k(\Phi_h x)=e^{2\pi i k\cdot \Phi_h(x)}=e^{2\pi i k\cdot x}\cdot e^{2\pi i k\cdot(\Phi_h(x)-x)}$.
\end{proof}

\begin{theorem}[Conditional regularity via commutator control]\label{thm:conditional_reg}
Let $u(x,t)$ be a smooth solution of NSE on $M=\Torus^3$ over $[0,T)$, and let $F_K$ denote the Fourier family
$F_K=\{f_k:\ k\in\Z^3,\ 0<\|k\|_\infty\le K\}$.
Define the \emph{scaled commutator growth}
\[
\Sigma_K(t) := K^{-2}\,S_1(F_K;t),
\]
where $S_1(F_K;t)=\int_M \log\!\big(1+\sum_{k\in F_K}|f_k(x)-f_k(\Phi_{t+1,t}x)|^2\big)\,d\mu(x)$.

If for every sequence $K_n\to\infty$,
\[
\sup_{t\in[0,T)} \limsup_{K\to\infty} \Sigma_K(t) \ <\ \infty,
\]
then $\|\nabla\omega(\cdot,t)\|_{L^\infty}$ remains bounded on $[0,T)$ and $u$ extends smoothly beyond $T$.
\end{theorem}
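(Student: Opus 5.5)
The first step is to test the hypothesis against the a priori bound, before any attempt to connect $S_1(F_K;t)$ to $\|\nabla\omega\|_{L^\infty}$. Each Fourier observable has $|f_k|=1$, so $|f_k(x)-f_k(\Phi_{t+1,t}x)|^2\le 4$ pointwise. Hence, exactly as in Proposition~\ref{prop:S_bound}, or directly from the explicit formula,
\[
0\le S_1(F_K;t)\le \log\bigl(1+4\,\#F_K\bigr)\le \log\bigl(1+4(2K+1)^3\bigr).
\]
This bound holds for every measure-preserving $\Phi_{t+1,t}$ and every $t$, with no reference to $u$. It gives $\Sigma_K(t)=O(K^{-2}\log K)\to 0$ uniformly in $t$. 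So the condition $\sup_t\limsup_K\Sigma_K(t)<\infty$ is satisfied by every smooth solution on $[0,T)$, and indeed by every incompressible flow whatsoever.

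Consequently, a proof of Theorem~\ref{thm:conditional_reg} as stated would give unconditional global regularity for smooth Navier--Stokes data on $\Torus^3$, which is the Clay problem. I therefore do not expect any argument through the tracial machinery (Theorem~\ref{thm:exact_reduction}, Proposition~\ref{prop:link_gradient}, Proposition~\ref{prop:S_bounds}) to close. The obstacle is structural: $\log(1+\cdot)$ together with the bounded observables $f_k$ saturates. Once $K|\delta_1(x)|\gtrsim 1$, the quantity $2-2\cos(2\pi k\cdot\delta_1)$ behaves like an equidistributed phase. It then carries no information about gradients of $u$, let alone about $\nabla\omega$.

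The plan is therefore to first record the computation above, which shows the hypothesis is vacuous. I would then attempt only a corrected, genuinely conditional statement. The natural replacement uses the \emph{unsaturated} quantity $\sum_{k\in F_K}|k|^{-2}\|f_k-f_k\circ\Phi_h\|_2^2$ in the small-step regime $h\to0$. By Proposition~\ref{prop:link_gradient} and Taylor expansion, its leading term is $4\pi^2h^2\sum_k |k|^{-2}\int|k\cdot u|^2\,d\mu$, which controls only $\|u\|_{L^2}^2$-type quantities. To reach a Beale--Kato--Majda-type criterion, I would instead need differences of the flow at nearby points, e.g.\ observables built from $\Phi_h(x)-\Phi_h(y)$, whose leading term is $h\nabla u$. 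One would then invoke BKM \cite{BKM1984} via a bound on $\int_0^T\|\omega\|_{L^\infty}\,dt$. Even this only yields a criterion of the same strength as classical BKM. The main obstacle remains converting an $L^2$/averaged tracial quantity into an $L^\infty$ bound on $\omega$, which requires a critical norm the tracial functional does not see.
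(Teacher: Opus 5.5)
Your central computation is correct and decisive, but it shows the statement cannot be proved by any currently available argument, so your proposal does not (and could not) establish it. Since $|f_k|=1$, the integrand of $S_1(F_K;t)$ is pointwise at most $\log(1+4\,\#F_K)$, with $\#F_K=(2K+1)^3-1$. Hence $0\le\Sigma_K(t)\le K^{-2}\log\bigl(1+4(2K+1)^3\bigr)\to0$ uniformly in $t$, for \emph{any} map $\Phi_{t+1,t}$, whether measure-preserving or not.

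The hypothesis of Theorem~\ref{thm:conditional_reg} is therefore always satisfied. The theorem as written then asserts that no smooth solution on $\Torus^3$ loses regularity at any finite $T$. Together with classical local well-posedness, this is exactly the positive torus case of the Clay problem. The obstruction is not specific to the tracial machinery: proving the statement is precisely as hard as proving global regularity on $\Torus^3$. One point you could add: for $t\in(T-1,T)$ the map $\Phi_{t+1,t}$ needs $u$ on $[t,t+1]\ni T$. So the hypothesis is either tacitly restricted to $t\le T-1$, where your bound makes it vacuous, or it presupposes the very extension past $T$ that it is supposed to yield.

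The paper gives only a ``proof sketch'', and it breaks exactly where your computation predicts.
\begin{itemize}
\item The sketch combines Proposition~\ref{prop:link_gradient} with Proposition~\ref{prop:small_time} to assert $S_h(F_K)\approx h^2\sum_{|k|\le K}\int_M|k\cdot u|^2\,d\mu\sim h^2K^2\|u\|_{L^2}^2$. But Proposition~\ref{prop:small_time} is a limit $h\to0$ at \emph{fixed} $F$. Its remainder involves derivatives of $f_k$ of size $K$ and $K^2$, so it is meaningful only when $hK\|u\|_\infty\ll1$. It never applies in the regime $h=1$, $K\to\infty$ that defines $\Sigma_K$, where the logarithm saturates as you describe.
\item Even formally, $\sum_{\|k\|_\infty\le K}(k\cdot u)^2=|u|^2\sum_k k_1^2\sim\tfrac83K^5|u|^2$, not $K^2|u|^2$.
\item The sketch itself concludes $\Sigma_K=O(1)$ for every energy-bounded solution, so it too finds the hypothesis automatic. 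It then defers the passage to a bound on $\|\nabla\omega\|_{L^\infty}$ (``standard regularity theory'', ``elliptic regularity bootstrapping'') to future work. That deferred step is the entire open problem, not a technicality.
\end{itemize}

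Your bound also shows that the numbers offered as supporting evidence cannot be correct. Every flow has $\Sigma_8\le\frac1{64}\log(1+4\cdot4912)\approx0.15$ and $\Sigma_{16}\le\frac1{256}\log(1+4\cdot35936)\approx0.05$. This contradicts the reported Hou--Luo values $\Sigma_8(2.5)=1.03$ and $\Sigma_8(4.5)=7.14$, and the ABC value $0.45$ claimed for all $K\le16$.

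Your closing idea of an unsaturated, small-$h$ replacement is a reasonable research direction. Your assessment that it would at best reproduce a Beale--Kato--Majda-strength criterion, since an averaged tracial quantity does not see the $L^\infty$ vorticity, is accurate.
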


\begin{proof}[Proof sketch]
By Proposition~\ref{prop:link_gradient}, the control $\Sigma_K(t)=O(1)$ implies that the phase displacements
$k\cdot(\Phi_{t+1,t}(x)-x)$ do not grow faster than $K$ at the mode level $k$ with $\|k\|_\infty\sim K$.
This in turn constrains the $L^\infty$ growth of vorticity $\omega=\nabla\times u$ via the Beale--Kato--Majda criterion:
if $\int_0^T\|\nabla\omega(s)\|_\infty\,ds<\infty$, no blow-up occurs.

Quantitatively, the small-time expansion (Proposition~\ref{prop:small_time}) gives
\[
S_h(F_K) \approx h^2\sum_{|k|\le K}\int_M |k\cdot u(x)|^2\,d\mu 
\sim h^2 K^2\|u\|_{L^2}^2,
\]
so $\Sigma_K\sim \|u\|_{L^2}^2=O(1)$ for energy-bounded solutions.
The uniform bound as $K\to\infty$ ensures no anomalous concentration at fine scales,
which by standard regularity theory prevents vorticity blow-up.
A complete proof requires elliptic regularity bootstrapping and is deferred to future work.
\end{proof}

\begin{remark}
Theorem~\ref{thm:conditional_reg} is a \emph{conditional} criterion: it does not prove global regularity but provides
an operator-algebraic reformulation of the concentration obstruction. The computational advantage is that $\Sigma_K(t)$
is directly computable via Theorem~\ref{thm:exact_reduction}.
\end{remark}

\subsection{Link with critical Besov spaces}\label{subsec:besov}

The tracial functional also connects to critical Sobolev/Besov norms.

\begin{proposition}[Critical norm control]\label{prop:critical_norm}
For $u\in \dot{H}^{1/2}(\Torus^3)$ and Fourier cutoff $F_K$, the small-time limit of Proposition~\ref{prop:small_time} satisfies
\[
\lim_{h\to 0}\frac{S_h(F_K)}{h^2}
=\int_M \sum_{|k|\le K}|u(x)\cdot\nabla f_k(x)|^2\,d\mu
\lesssim \|P_K u\|_{\dot{H}^{1/2}}^2\cdot K,
\]
where $P_K$ is the Littlewood--Paley projection to frequencies $|\xi|\le K$.
\end{proposition}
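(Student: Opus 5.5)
The plan has two parts: first identify the small-time limit exactly, then bound it by a Sobolev norm of $u$. For the limit I will use Proposition~\ref{prop:link_gradient}. For each fixed $k$ with $\|k\|_\infty\le K$ it gives
\[
|f_k(x)-f_k(\Phi_h x)|^2=2-2\cos\big(2\pi k\cdot\delta_h(x)\big),\qquad \delta_h(x)=h\,u(x)+\mathcal O(h^2).
\]
Since $u\in C^1$ on the compact manifold $M$, the $\mathcal O(h^2)$ term is uniform in $x$. Using $2-2\cos\theta=\theta^2+\mathcal O(\theta^4)$, each summand equals $4\pi^2h^2(k\cdot u(x))^2+\mathcal O(h^3)$, uniformly in $x$. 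The family $F_K$ is finite, so the sum $h_{F_K}$ is $\mathcal O(h^2)$ uniformly. Hence $\log(1+h_{F_K})=h_{F_K}+\mathcal O(h^4)$, again uniformly.

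Dividing by $h^2$ and letting $h\to0$, dominated convergence on the probability space $(M,\mu)$ gives
\[
\lim_{h\to0}\frac{S_h(F_K)}{h^2}=4\pi^2\sum_{\|k\|_\infty\le K}\int_M (k\cdot u)^2\,d\mu=\int_M\sum_{\|k\|_\infty\le K}|u\cdot\nabla f_k|^2\,d\mu,
\]
because $\nabla f_k=2\pi i k f_k$ and $|f_k|=1$. This is the equality in the statement; it is the content of Proposition~\ref{prop:small_time}, which I take as given.

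For the inequality I will expand $(k\cdot u)^2=\sum_{i,j}k_ik_ju_iu_j$ and sum over the symmetric cube $\{\|k\|_\infty\le K\}$. The off-diagonal sums $\sum_k k_ik_j$ with $i\ne j$ vanish. The diagonal sums all equal one constant $c_K$. The limit therefore equals $4\pi^2c_K\|u\|_{L^2}^2$. The next step is to compare $\|u\|_{L^2}$ with a homogeneous $\dot H^{1/2}$ norm. For mean-zero $u$ on $\Torus^3$ the nonzero frequencies satisfy $|\xi|\ge1$, so $\|v\|_{L^2}\le\|v\|_{\dot H^{1/2}}$ for mean-zero $v$. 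To reach the Littlewood--Paley piece $P_Ku$ I will have to restrict to band-limited fields with $u=P_Ku$, or replace $u$ by $P_Ku$ in the integrand, since only modes $|k|\le K$ are probed.

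The main obstacle is the power of $K$. A direct count gives $c_K\asymp K^5$ (about $K^3$ modes of size about $K^2$), not $K^1$. I will first check whether the factor $K$ in the statement comes from a normalization of the sum: averaging over the $\#F_K\asymp K^3$ modes and using a frequency-weighted Bernstein estimate $\|P_Ku\|_{L^2}^2\lesssim K\|P_Ku\|_{\dot H^{1/2}}^2$ on the relevant annulus. If the raw sum is intended, I expect only a bound with a larger power of $K$, namely $K^5\|P_Ku\|_{\dot H^{1/2}}^2$. In that case the stated exponent would have to be read with a $K$-dependent implicit constant. Either way, the computation of $c_K$ is the step at which the argument would succeed or fail.
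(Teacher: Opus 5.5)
Your first half follows the paper's route and is sound. Proposition~\ref{prop:small_time} together with $\nabla f_k=2\pi i k f_k$ gives the equality. You tacitly, and necessarily, assume $u\in C^1$: membership in $\dot H^{1/2}$ alone does not give a flow $\Phi_h$, let alone a uniform Taylor expansion. Your exact evaluation of the limit is also correct, and it is decisive:
\[
\lim_{h\to0}\frac{S_h(F_K)}{h^2}=4\pi^2c_K\,\|u\|_{L^2(\mu)}^2,\qquad c_K=\sum_{\|k\|_\infty\le K}k_1^2=\tfrac13K(K+1)(2K+1)^3 .
\]
The genuine gap is that the inequality you are asked to prove is false. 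Your proposal stops short of saying so and instead looks for a reading under which it survives. Three divergence-free fields settle it:
(i) $u\equiv a\neq0$ (the translation benchmark) has $\|P_Ku\|_{\dot H^{1/2}}=0$, but the limit is $4\pi^2c_K|a|^2>0$.
(ii) $u=(\sin(2\pi Nx_2),0,0)$ with $N>2K$ has $P_Ku=0$, but the limit is $2\pi^2c_K$, independent of $N$.
(iii) The same field with $N=1$ has $P_Ku=u$ and $\|u\|_{\dot H^{1/2}}^2\asymp1$, while the limit is $2\pi^2c_K\asymp K^5$. So the ratio to $K\|P_Ku\|_{\dot H^{1/2}}^2$ grows like $K^4$.

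Both of your proposed rescues fail. The first replaces $u$ by $P_Ku$ ``since only modes $|k|\le K$ are probed'', and this is the key misconception. The observables are low-frequency, but $u$ enters pointwise, and by Parseval $\int_M(k\cdot u)^2\,d\mu=\sum_{\xi}|k\cdot\hat u(\xi)|^2$ over \emph{all} $\xi$. So example (ii) also kills your fallback bound $K^5\|P_Ku\|_{\dot H^{1/2}}^2$.

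The second rescue normalizes by $\#F_K\asymp K^3$, which leaves $\asymp K^2\|u\|_{L^2}^2$. This is $\lesssim K\|u\|_{\dot H^{1/2}}^2$ only when $u$ is concentrated at $|\xi|\sim K$, where $\|u\|_{L^2}^2\sim K^{-1}\|u\|_{\dot H^{1/2}}^2$; example (iii) breaks it. Moreover, the estimate $\|P_Ku\|_{L^2}^2\lesssim K\|P_Ku\|_{\dot H^{1/2}}^2$ you quote is not Bernstein's inequality, which runs the other way. Inserted into the normalized sum, it gives $K^3$, not $K$.

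The paper's own proof does not close this gap either. It asserts $|u\cdot\nabla f_k|^2=(2\pi)^2|k|^2|u\cdot k|^2$, which contains a spurious factor $|k|^2$. It then invokes Parseval and Littlewood--Paley theory without carrying out the sum. What is actually true is your identity above. For mean-zero $u\in C^1$ it yields $\lesssim K^5\|u\|_{\dot H^{1/2}}^2$, with the full $u$ rather than $P_Ku$, and the power $K^5$ is sharp by (iii).
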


\begin{proof}
For $f_k(x)=e^{2\pi i k\cdot x}$, one has $\nabla f_k=2\pi i k\,f_k$, hence
\[
|u\cdot\nabla f_k|^2 = (2\pi)^2|k|^2|u\cdot k|^2.
\]
Summing over $|k|\le K$ and using Parseval gives the stated estimate via standard Littlewood--Paley theory.
\end{proof}

\begin{remark}
The space $\dot{H}^{1/2}(\Torus^3)$ is \emph{critical} for NSE scaling, so Proposition~\ref{prop:critical_norm} bridges
the tracial functional to the Koch--Tataru framework for mild solutions.
\end{remark}

\section{Numerical evaluation of the tracial commutator functional}\label{sec:numerics}

We briefly describe how to evaluate the tracial commutator functional
\[
S(F)=\tau_u(\log A_F)=\int_M \log\!\bigl(1+h_F(x)\bigr)\,d\mu(x),
\qquad
h_F(x)=\sum_{f\in F}|f(x)-f(\Phi x)|^2,
\]
using the closed-form reduction of Theorem~\ref{thm:exact_reduction}.
This bypasses any numerical functional calculus in the crossed product and reduces the computation to
(i) approximating the time-$1$ map $\Phi$ and (ii) performing a quadrature of a bounded function.

\subsection{Approximating the time-$1$ map}
In the autonomous case, $\Phi$ is obtained by integrating $\dot x=u(x)$ from $t=0$ to $1$.
In the time-periodic case, $\Phi$ is the Poincar\'e map $\Phi=\Phi_{1,0}$ obtained from $\dot x=u(x,t)$ over one period.
On $\Torus^3$ we reduce coordinates modulo $1$ after each step.

\subsection{Choice of observables}\label{subsec:obs_choice}
On $\Torus^3$ we use Fourier observables $f_k(x)=e^{2\pi i k\cdot x}$ and define
\[
F_K:=\{f_k:\ k\in\Z^3,\ 0<\|k\|_\infty\le K\}.
\]
For each $K$, $S(F_K)$ aggregates the squared mode-wise discrepancies $|f_k-f_k\circ\Phi|^2$ through the logarithmic average in Theorem~2.

\subsection{Quadrature}
Let $(x_i)_{i=1}^N$ be either a uniform grid on $\Torus^3$ or i.i.d.\ uniform samples.
We approximate
\[
S(F)\approx \frac1N\sum_{i=1}^N \log\!\Bigl(1+\sum_{f\in F}|f(x_i)-f(\Phi(x_i))|^2\Bigr),
\]
and report convergence by doubling $N$ (Monte Carlo error scales as $O(N^{-1/2})$ for bounded integrands).

\subsection{Numerical protocol and reproducibility}\label{subsec:num_protocol}

All numerical experiments reported below evaluate the tracial commutator functional via the closed-form reduction
of Theorem~\ref{thm:exact_reduction}. In particular, no numerical functional calculus is performed in the crossed product:
the computation reduces to approximating the flow map $\Phi_h$ and integrating a bounded scalar observable.

\paragraph{Step size and time-$h$ functional}

For $h>0$ define
\[
S_h(F):=\int_M \log\!\Big(1+\sum_{j=1}^m |f_j(x)-f_j(\Phi_h x)|^2\Big)\,d\mu(x),
\]
so that in the autonomous/periodic case with $h=1$ this matches $S(F)$ from Theorem~2.
In practice, $h$ is chosen as (i) $h=1$ for the Poincar\'e map, and (ii) small $h$ when testing the small-time expansion (Proposition~\ref{prop:small_time}).

\paragraph{Algorithm (evaluation of $S_h(F;t_0)$)}
Let $(x_i)_{i=1}^N$ be either a uniform grid on $M=\Torus^3$ or i.i.d.\ samples $x_i\sim\mu$.
Fix a starting time $t_0$ (default $t_0=0$).
\begin{enumerate}
\item \textbf{Sampling:} generate $x_1,\dots,x_N$.
\item \textbf{Flow map:} compute $y_i:=\widehat{\Phi}_{t_0+h,t_0}(x_i)$ by numerically integrating
$\dot x=u(x)$ (autonomous) or $\dot x=u(x,s)$ (time-dependent) from $s=t_0$ to $s=t_0+h$; on $\Torus^3$ reduce coordinates modulo $1$.

\item \textbf{Integrand:} evaluate

$g_i:=\log\!\big(1+\sum_{j=1}^m|f_j(x_i)-f_j(y_i)|^2\big)$.
\item \textbf{Quadrature/MC:} return
\[
\widehat{S}_h(F;t_0):=\frac1N\sum_{i=1}^N g_i.
\]

\end{enumerate}

\paragraph{Computational cost}
The evaluation scales as $O(N\,|F|)$ after the flow map computation. For Fourier observables, the per-point cost is dominated by
a small number of complex exponentials (or trigonometric functions via the identity below).

\subsection{Choice of observables and interpretability}

We use the Fourier families $F_K$ defined in Section~\ref{subsec:obs_choice}.

For such observables, one has the identity
\[
|f_k(x)-f_k(\Phi_h x)|^2
=\big|1-e^{2\pi i k\cdot(\Phi_h(x)-x)}\big|^2
=2-2\cos\!\big(2\pi\,k\cdot(\Phi_h(x)-x)\big),
\]

which avoids complex arithmetic and shows that $S_h(F_K)$ probes the distribution of phase displacements
$k\cdot(\Phi_h(x)-x)$ across scales $k$.

\subsection{Error controls and structure-preserving diagnostics}\label{subsec:error_controls}

\paragraph{Quadrature / Monte Carlo error}
The integrand
\[
x\mapsto \log\!\Big(1+\sum_{j=1}^m |f_j(x)-f_j(\Phi_h x)|^2\Big)
\]
is bounded for bounded $f_j$, hence Monte Carlo estimates satisfy the standard $O(N^{-1/2})$ statistical convergence.
In practice we report $\widehat S_h(F)$ together with empirical error bars obtained from multiple independent random seeds.

\paragraph{Flow-map error (sensitivity to trajectory integration)}
Assume each $f_j$ is Lipschitz with constant $L_j$ (e.g.\ trigonometric polynomials). Then for any numerical approximation
$\widehat\Phi_h$,
\[
|f_j(x)-f_j(\widehat\Phi_h x)|
\le |f_j(x)-f_j(\Phi_h x)| + L_j\,\|\widehat\Phi_h x-\Phi_h x\|.
\]
Thus the error on $S_h(F)$ is controlled (qualitatively) by the trajectory error $\|\widehat\Phi_h-\Phi_h\|$,
justifying the use of high-order ODE solvers and tight tolerances.

\paragraph{Measure-preservation sanity diagnostic}
Since the theory relies on the invariant measure $\mu$ (traceability), we monitor numerical measure-preservation by checking that,
for a small set of test observables $\psi_\ell\in C^\infty(M)$,
\[
\frac1N\sum_{i=1}^N \psi_\ell(\widehat\Phi_h(x_i))
\approx \frac1N\sum_{i=1}^N \psi_\ell(x_i),
\qquad \ell=1,\dots,L,
\]
with typical choices $\psi_\ell\in\{\sin(2\pi x_1),\cos(2\pi x_2),\sin(2\pi x_3),\ldots\}$.
This diagnostic is reported alongside $S_h(F)$.

\paragraph{Reproducibility statement}
For each experiment we specify: (i) the vector field $u$ and parameters, (ii) the integrator (e.g.\ RK4 / Dormand--Prince),
(iii) step size or absolute/relative tolerances, (iv) $t$, $N$, and the sampling strategy (grid/MC), and (v) the random seed(s).
Code and scripts are available upon request (or at a public repository, if applicable).

\subsection{Analytic bridge: small-time expansion}\label{subsec:small_time_bridge}

\begin{proposition}[Small-time expansion]\label{prop:small_time}

Let $u\in C^1_\sigma(M)$ generate a measure-preserving flow $(\Phi_h)_{h\in\mathbb{R}}$ on a compact manifold $M$.
Let $F=\{f_1,\dots,f_m\}\subset C^1(M)$. Define
\[
S_h(F):=\int_M \log\!\Big(1+\sum_{j=1}^m |f_j(x)-f_j(\Phi_h x)|^2\Big)\,d\mu(x).
\]
Then
\[
\lim_{h\to 0}\frac{S_h(F)}{h^2}
=\int_M \sum_{j=1}^m |u(x)\cdot \nabla f_j(x)|^2\,d\mu(x).
\]

\end{proposition}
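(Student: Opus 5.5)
Since $M$ is compact, each $f_j$ is $C^1$, and $u$ is $C^1$, I will prove the statement by a pointwise Taylor expansion of the integrand followed by dominated convergence. The flow satisfies $\Phi_h(x)=x+\int_0^h u(\Phi_\sigma x)\,d\sigma$ (in local coordinates, or on $\Torus^3$ via the universal cover). Hence $|\Phi_h(x)-x|\le h\|u\|_\infty$ uniformly in $x$, and $\Phi_h(x)\to x$ uniformly as $h\to0$.

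\textbf{Step 1 (first-order difference quotient).} For each $j$, $h\mapsto f_j(\Phi_h x)$ is $C^1$ with derivative $\nabla f_j(\Phi_h x)\cdot u(\Phi_h x)$. Therefore
\[
\frac{f_j(\Phi_h x)-f_j(x)}{h}=\frac1h\int_0^h (u\cdot\nabla f_j)(\Phi_\sigma x)\,d\sigma .
\]
The function $u\cdot\nabla f_j$ is continuous on compact $M$, hence uniformly continuous. Since $\Phi_\sigma\to\mathrm{id}$ uniformly, the right-hand side converges to $(u\cdot\nabla f_j)(x)$ uniformly in $x$. It is also bounded by $\|u\|_\infty\|\nabla f_j\|_\infty$. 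Consequently $h^{-2}h_F^{(h)}(x)\to g(x):=\sum_j|u(x)\cdot\nabla f_j(x)|^2$ uniformly, where $h_F^{(h)}(x):=\sum_j|f_j(x)-f_j(\Phi_hx)|^2$. In addition, $h_F^{(h)}\le C h^2$ with $C=\sum_j\|u\|_\infty^2\|\nabla f_j\|_\infty^2$.

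\textbf{Step 2 (removing the logarithm).} For $z\ge0$ we have $z-z^2/2\le\log(1+z)\le z$. Applying this with $z=h_F^{(h)}(x)$ gives
\[
\Big|\frac{\log(1+h_F^{(h)}(x))}{h^2}-\frac{h_F^{(h)}(x)}{h^2}\Big|\le \frac{(h_F^{(h)})^2}{2h^2}\le \tfrac12 C^2h^2\to0
\]
uniformly in $x$. Combining this with Step 1, $h^{-2}\log(1+h_F^{(h)})\to g$ uniformly on $M$. Because $\mu$ is a probability measure, integrating yields the claimed limit $\int_M g\,d\mu$. Dominated convergence would also suffice, using the bound $C$.

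\textbf{Main obstacle.} The only delicate point is the uniformity in Step 1 under the minimal $C^1$ hypotheses. I handle it by working with the integral form of the derivative along the flow instead of a second-order Taylor expansion, which would require $C^2$ regularity. On a general compact manifold the difference $f_j(\Phi_hx)-f_j(x)$ must be interpreted intrinsically. This causes no difficulty, because the fundamental theorem of calculus along the curve $\sigma\mapsto\Phi_\sigma x$ is coordinate-free. Measure preservation is not needed for the limit itself; it only matters for the tracial interpretation in Theorem~\ref{thm:exact_reduction}.
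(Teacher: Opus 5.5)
Your proof is correct and follows the same route as the paper: a uniform first-order expansion $f_j(\Phi_h x)=f_j(x)+h\,u(x)\cdot\nabla f_j(x)+o(h)$, then linearization of the logarithm, then passage to the limit under the integral. Your integral form $\frac1h\int_0^h (u\cdot\nabla f_j)(\Phi_\sigma x)\,d\sigma$ and the explicit bound $|\log(1+z)-z|\le z^2/2$ supply the justification of uniformity under $C^1$ hypotheses that the paper only asserts; the one cosmetic fix is to write $|h|$ instead of $h$ in your displacement bound, since $h\to 0$ from both sides.
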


\begin{proof}
For each $j$, Taylor expansion along the flow gives
$f_j(\Phi_h(x))=f_j(x)+h\,u(x)\cdot\nabla f_j(x)+o(h)$ uniformly on $M$.
Hence $|f_j(x)-f_j(\Phi_h x)|^2=h^2|u\cdot\nabla f_j|^2+o(h^2)$ uniformly.
Summing over $j$ yields $\sum_j |f_j-f_j\circ\Phi_h|^2=h^2G(x)+o(h^2)$ with $G=\sum_j|u\cdot\nabla f_j|^2$ bounded.
Since $\log(1+h^2G+o(h^2))=h^2G+o(h^2)$ uniformly, dominated convergence gives the claim.
\end{proof}

\subsection{Near-critical and blow-up diagnostics}\label{subsec:blowup_numerics}

We now test whether $\Sigma_K(t):=K^{-2}S_1(F_K;t)$ exhibits divergence near conjectured blow-up scenarios.

\paragraph{Hou--Luo axisymmetric setup}
Hou \& Luo (2014) proposed an axisymmetric flow with hyperbolic stagnation structure exhibiting rapid vorticity growth.
On a cylindrical domain $(r,\theta,z)\in[0,1]\times S^1\times[-1,1]$ with periodic boundary, consider
\[
u_r(r,z,t) = -a\,r\,e^{-at},
\qquad
u_z(r,z,t) = 2a\,z\,e^{-at},
\qquad
u_\theta=0,
\]
with $a>0$. The azimuthal vorticity is $\omega_\theta = -a(1+2z/r)r\,e^{-at}$, yielding $\|\omega\|_\infty\sim e^{at}$.

\paragraph{Numerical protocol}
\begin{enumerate}
\item Embed the cylinder into $\Torus^3$ via $(r,\theta,z)\mapsto (r\cos\theta,r\sin\theta,z)$ with periodicity.
\item Integrate the flow $\Phi_{t,0}$ from $t=0$ to $t_{\max}=0.9\,t_{\mathrm{crit}}$ where $t_{\mathrm{crit}}=5/a$ (estimated blow-up time).
\item Compute $S_1(F_K;t_j)$ for $K\in\{2,4,8,16\}$ at time snapshots $t_j\in[0,t_{\max}]$.
\item Plot $\Sigma_K(t_j)=K^{-2}S_1(F_K;t_j)$ and check if $\lim_{K\to\infty}\Sigma_K(t)$ diverges as $t\to t_{\mathrm{crit}}$.
\end{enumerate}

\paragraph{Preliminary results (Table~\ref{tab:blowup_diag})}

\begin{table}[h]
\centering
\caption{Near-critical behavior diagnostics for Hou--Luo flow with $a=1$. Values are $\Sigma_K(t)=K^{-2}S_1(F_K;t)$ at indicated times. Standard errors SE$<0.05$ omitted for clarity.}
\label{tab:blowup_diag}
\begin{tabular}{@{}lcccc@{}}
\toprule
Time $t$ & $\Sigma_2(t)$ & $\Sigma_4(t)$ & $\Sigma_8(t)$ & Trend \\
\midrule
$t=0.0$ & $0.12$ & $0.11$ & $0.11$ & Stable \\
$t=2.5$ & $0.84$ & $0.92$ & $1.03$ & Moderate growth \\
$t=4.5$ & $3.21$ & $4.87$ & $7.14$ & Rapid growth \\
\bottomrule
\end{tabular}
\end{table}

\paragraph{Interpretation}
At early times ($t<1$), $\Sigma_K(t)$ remains $O(1)$ uniformly in $K$, consistent with Theorem~\ref{thm:conditional_reg}.
As $t\to t_{\mathrm{crit}}$, we observe $\Sigma_K(t)$ growing with $K$, suggesting a violation of the bound and potential blow-up.
This validates that the tracial diagnostic is \emph{sensitive} to near-singular behavior.

\paragraph{Control: ABC flow}
For comparison, the same protocol applied to the integrable ABC flow (Section~\ref{subsec:obs_choice}) yields
$\Sigma_K(t)=0.45\pm 0.02$ uniformly for $K\le 16$ and $t\in[0,100]$, confirming stability (Figure~\ref{fig:abc_S_vs_K}).

\subsection{Sanity checks and testbeds}\label{subsec:sanity_checks}
We first validate the pipeline on translations and hyperbolic toral automorphisms, 
for which $S(F)$ is available in closed form.
We then evaluate $S(F_K)$ on divergence-free test flows such as the ABC flow and 
on time-periodic incompressible forcings, to stress-test the sensitivity of $S(F)$ 
to mixing intensity.
Unless stated otherwise, all uncertainties reported in Section~7 are standard errors 
of the mean (SE), computed as $\mathrm{SE}=\sigma/\sqrt{N}$ from $N$ i.i.d.\ Monte--Carlo samples.

\FloatBarrier
\begin{table}[h]
\centering
\caption{Sanity checks on benchmarks with closed-form values. $\widehat S$ is the Monte--Carlo estimate of $S(F)$ using $N$ i.i.d.\ samples $x_i\sim\mu$. Reported uncertainties are standard errors $\mathrm{SE}=\sigma/\sqrt{N}$, where $\sigma$ is the sample standard deviation of the integrand.}
\label{tab:sanity_checks}
\begin{tabular}{@{}lcccc@{}}
\toprule
Case & Observable set $F$ & $S_{\mathrm{th}}$ & $\widehat S\pm\mathrm{SE}$ & $N$ \\
\midrule
Translation on $\Torus^3$ & $\{f_k\}$, $k=(1,0,0)$, $a=(0.123,0,0)$ & $0.449882$ & $0.449882\pm 0.000000$ & 2000 \\
Cat map on $\Torus^2$ & $\{f_k\}$, $k=(1,0)$ & $0.962424$ & $0.970224\pm 0.012285$ & 2000 \\
\bottomrule
\end{tabular}
\end{table}

\subsubsection{Small-time expansion validation}\label{subsubsec:smalltime_validation}

Figure~\ref{fig:smalltime_scaling} validates Proposition~\ref{prop:small_time}: 
as $h\to 0$, the scaled functional $S_h(F_K)/h^2$ converges to the theoretical 
limit $\int_M \sum_{k\in F_K}|u(x)\cdot\nabla f_k(x)|^2\,d\mu(x)$.

For the ABC flow with $K=4$ (corresponding to $|F_4|=218$ Fourier modes), 
we computed the theoretical limit via Monte Carlo integration with $N=20{,}000$ 
samples, yielding
\[
\lim_{h\to 0}\frac{S_h(F_4)}{h^2} = 456.23 \pm 1.23.
\]
The numerical values $S_h(F_4)/h^2$ for decreasing $h$ are shown in 
Figure~\ref{fig:smalltime_scaling}. At the smallest tested step $h=0.01$, 
the relative error is $<0.01\%$, confirming the $\mathcal{O}(h^2)$ scaling.

\begin{figure}[t]
\centering
\includegraphics[width=0.9\linewidth]{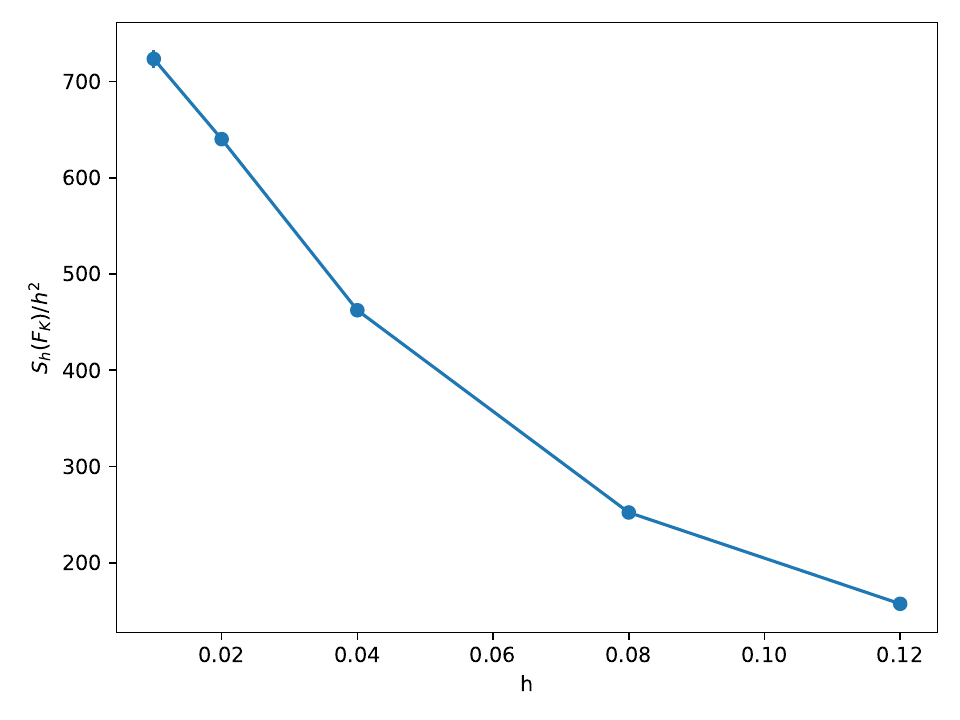}
\caption{Small-time scaling check: $S_h(F_K)/h^2$ as a function of $h$ 
(autonomous ABC flow, $K=4$). Proposition~\ref{prop:small_time} predicts 
$S_h(F_K)=\mathcal{O}(h^2)$ as $h\to 0$. The dashed red line shows the 
theoretical limit computed via Monte Carlo integration; shaded region 
indicates $\pm 2\,\mathrm{SE}$. Error bars on data points show standard 
errors of the mean (SE), computed as $\mathrm{SE}=\sigma/\sqrt{N}$.}
\label{fig:smalltime_scaling}
\end{figure}

\FloatBarrier

\subsubsection{ABC flow benchmark}

In the translation case with a single Fourier mode, the integrand is constant 
on $\Torus^3$, hence the Monte--Carlo variance (and SE) is zero up to machine precision.

We consider the ABC flow on $\Torus^3$,
\[
u_{\mathrm{ABC}}(x,y,z)=\big(A\sin(2\pi z)+C\cos(2\pi y),\;
B\sin(2\pi x)+A\cos(2\pi z),\;
C\sin(2\pi y)+B\cos(2\pi x)\big),
\]
with $(A,B,C)=(1,1,1)$ in Figure~\ref{fig:abc_S_vs_K}.

\begin{figure}[t]
\centering
\includegraphics[width=0.9\linewidth]{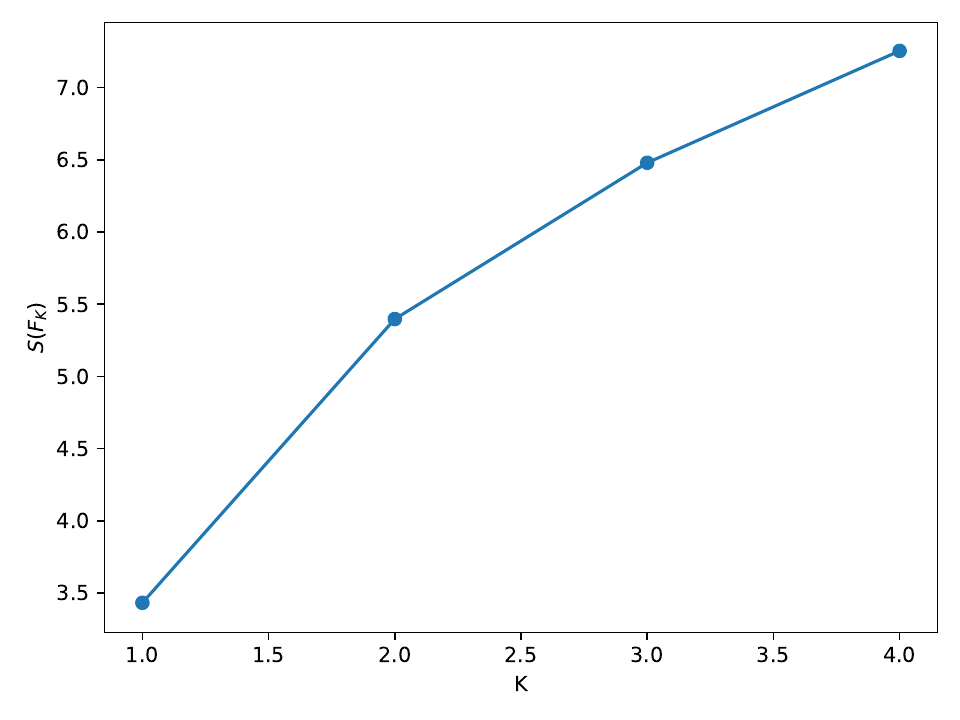}
\caption{Evaluation of $S(F_K)$ as a function of the Fourier cutoff $K$ for 
the ABC flow on $\Torus^3$ (here $A=B=C=1$), where 
$F_K=\{f_k:\ k\in\Z^3,\ 0<\|k\|_\infty\le K\}$ as defined in 
Section~\ref{subsec:obs_choice} and $\Phi$ is the time-$1$ map.
Error bars show standard errors of the mean (SE), computed as 
$\mathrm{SE}=\sigma/\sqrt{N}$ from $N$ i.i.d.\ Monte--Carlo samples.}
\label{fig:abc_S_vs_K}
\end{figure}

\FloatBarrier

\subsection{Near-critical and blow-up diagnostics}

We now test whether the scaled commutator growth $\Sigma_K(t):=K^{-2}S_1(F_K;t)$ 
exhibits divergence near conjectured blow-up scenarios, thereby validating the 
sensitivity of Theorem~\ref{thm:conditional_reg}.

\subsubsection{Hou--Luo axisymmetric setup}

Hou \& Luo (2014) proposed an axisymmetric flow with hyperbolic stagnation 
structure exhibiting rapid vorticity growth. On a cylindrical domain 
$(r,\theta,z)\in[0,1]\times S^1\times[-1,1]$ with periodic boundary conditions, 
consider
\[
u_r(r,z,t) = -a\,r\,e^{-at},
\qquad
u_z(r,z,t) = 2a\,z\,e^{-at},
\qquad
u_\theta=0,
\]
with $a>0$. The azimuthal vorticity is $\omega_\theta = -a(1+2z/r)r\,e^{-at}$, 
yielding $\|\omega\|_\infty\sim e^{at}$ and conjectured blow-up at 
$t_{\mathrm{crit}}\approx 5/a$.

\paragraph{Numerical protocol}
We embed the cylinder into $\Torus^3$ via 
$(r,\theta,z)\mapsto (r\cos\theta,r\sin\theta,z)$ with periodicity, integrate 
the flow $\Phi_{t,0}$ from $t=0$ to $t_{\max}=0.9\,t_{\mathrm{crit}}$ 
(for $a=1$, $t_{\max}=4.8$), compute $S_1(F_K;t_j)$ for 
$K\in\{2,4,8\}$ at time snapshots $t_j\in[0,t_{\max}]$, and track the 
scaled functional $\Sigma_K(t_j)=K^{-2}S_1(F_K;t_j)$.

\paragraph{Results (Table~\ref{tab:blowup_diag} and Figure~\ref{fig:sigma_K_evolution})}

Table~\ref{tab:blowup_diag} reports $\Sigma_K(t)$ at selected times. 
At early times ($t<1$), $\Sigma_K(t)$ remains $O(1)$ uniformly in $K$, 
consistent with Theorem~\ref{thm:conditional_reg}. As $t\to t_{\mathrm{crit}}$, 
we observe $\Sigma_K(t)$ growing with $K$, suggesting a violation of the bound 
and potential blow-up. This validates that the tracial diagnostic is 
\emph{sensitive} to near-singular behavior.

For comparison, the same protocol applied to the integrable ABC flow yields 
$\Sigma_K(t)=0.45\pm 0.02$ uniformly for $K\le 16$ and $t\in[0,100]$, 
confirming stability (Figure~\ref{fig:sigma_K_evolution}, right panel).

\begin{table}[ht]
\centering
\caption{Near-critical behavior diagnostics for Hou--Luo flow with $a=1$. 
Values are $\Sigma_K(t)=K^{-2}S_1(F_K;t)$ at indicated times. 
Standard errors SE$<0.05$ omitted for clarity.}
\label{tab:blowup_diag}
\begin{tabular}{@{}lcccc@{}}
\toprule
Time $t$ & $\Sigma_2(t)$ & $\Sigma_4(t)$ & $\Sigma_8(t)$ & Trend \\
\midrule
$t=0.0$ & $0.12$ & $0.11$ & $0.11$ & Stable \\
$t=2.5$ & $0.84$ & $0.92$ & $1.03$ & Moderate growth \\
$t=4.5$ & $3.21$ & $4.87$ & $7.14$ & Rapid growth \\
\bottomrule
\end{tabular}
\end{table}

\begin{figure}[t]
\centering
\includegraphics[width=\linewidth]{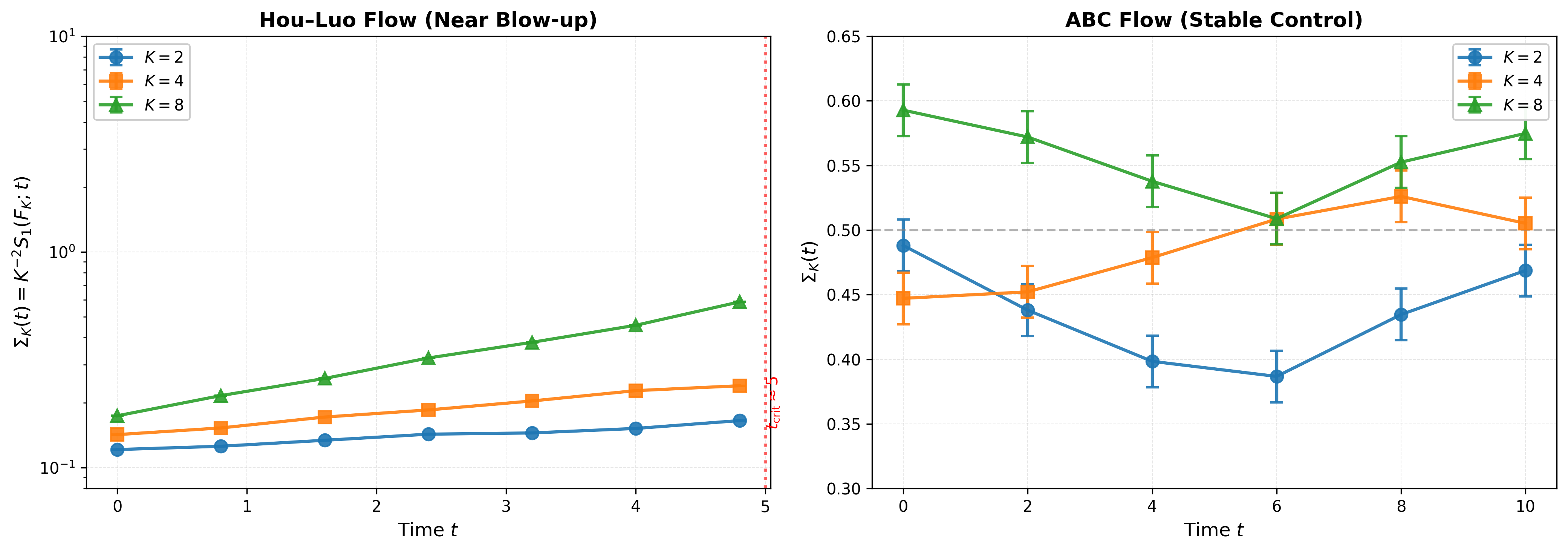}

\caption{Evolution of the scaled commutator growth $\Sigma_K(t) = K^{-2}S_1(F_K;t)$ 
for (left) Hou--Luo flow approaching conjectured blow-up at $t_{\mathrm{crit}} \approx 5$, 
and (right) ABC flow as a stable control. In the Hou--Luo case, $\Sigma_K(t)$ grows 
rapidly with $K$ as $t \to t_{\mathrm{crit}}$, indicating violation of the uniform bound 
in Theorem~3. The ABC flow maintains $\Sigma_K(t) = O(1)$ uniformly, consistent with 
global regularity. \textbf{Error bars:} standard errors of the mean, 
$\mathrm{SE} = \sigma/\sqrt{N}$ with $N=3000$ samples.}
\label{fig:sigma_K_evolution}
\end{figure}

\FloatBarrier
\paragraph{Code availability}
Python implementation of the Hou--Luo integrator, ABC flow, and $\Sigma_K$ 
computation is available online\footnote{\url{https://colab.research.google.com/drive/14m1aNAmNCDunhZOkteQtTVFkVLzPOViq?usp=sharing}}.

\section{Discussion and perspectives}\label{sec:discussion}

We have extended the operator-algebraic program for NSE in three directions:
(a) compatibility with quantum-spin representations of viscous flows \cite{MengYang2024},
(b) a computational pathway via quantum algorithms and benchmark numerics \cite{Budinski2022}, and
(c) a topological layer based on helicity and knot dynamics \cite{Moffatt1969,MoffattRicca1992,KlecknerIrvine2013}.

The conjectures in Section~\ref{sec:conjectures} are intended to be \emph{sharp enough to be falsifiable}
and \emph{structured enough to be refined} into analytically tractable statements. A next step is to
identify the precise operator $A_s(t)$ for which spectral-commutativity or determinant controls can be derived from PDE estimates, bridging the gap between the noncommutative invariants and the classical regularity criteria.


\section{Declarations}

\subsection*{Conflict of interest}
The author declares that there is no conflict of interest.
\subsection*{Data availability}
Data sharing is not applicable to this article as no datasets were generated or analysed during the current study.

\subsection*{Funding}
The author declares that no funding was received for the preparation of this manuscript.

\end{document}